\DeclareSymbolFont{cyrletters}{OT2}{wncyr}{m}{n}
\DeclareMathSymbol{\Sha}{\mathalpha}{cyrletters}{"58}
\let\mathcal\mathscr
\numberwithin{equation}{section}
\newtheorem{theorem}{Theorem}[section]
\newtheorem{lemma}[theorem]{Lemma}
\theoremstyle{definition}
\newtheorem*{ack}{Acknowledgements}
\newtheorem{remark}[theorem]{Remark}
\renewcommand{\phi}{\varphi}
\renewcommand{\rho}{\varrho}
\newcommand{\PP}{\mathbb{P}}
\newcommand{\A}{\mathbf{A}}
\newcommand{\FF}{\mathbb{F}}
\newcommand{\ZZ}{\mathbb{Z}}
\newcommand{\QQ}{\mathbb{Q}}
\newcommand{\RR}{\mathbb{R}}
\renewcommand{\H}{\mathrm{H}}
\newcommand{\cD}{\mathcal{D}}
\newcommand{\cF}{\mathcal{F}}
\newcommand{\Gal}{{\rm Gal}}
\renewcommand{\leq}{\leqslant}
\renewcommand{\geq}{\geqslant}
\renewcommand{\bar}{\overline}
\newcommand{\ve}{\varepsilon}
\DeclareMathOperator{\rank}{rank}
\DeclareMathOperator{\disc}{disc}
\DeclareMathOperator{\Pic}{Pic}
\DeclareMathOperator{\meas}{meas}
\DeclareMathOperator{\Spec}{Spec}
\DeclareMathOperator{\sel}{Sel}
\DeclareMathOperator{\Br}{Br}
\newcommand{\GL}{\mathrm{GL}_2}
\newcommand{\SL}{\mathrm{SL}_2}
\renewcommand{\hat}{\widehat}
\begin{document}

\date{\today}

\title{Many cubic surfaces contain rational points}

\author{T.D. Browning}
\address{School of Mathematics\\
University of Bristol\\ Bristol\\ BS8 1TW
\\ UK}
\email{t.d.browning@bristol.ac.uk}

\dedicatory{For Klaus Roth, in memoriam.}

\begin{abstract}
Building on recent work of Bhargava--Elkies--Schnidman  and Kriz--Li,
we   produce infinitely many smooth cubic surfaces defined  over the field of rational numbers that contain rational points.
\end{abstract}

\date{\today}

\thanks{2014  {\em Mathematics Subject Classification.} 
14G05 (11G05, 11G35, 14G25)}

\maketitle

\thispagestyle{empty}

\section{Introduction}

This paper is concerned with the Hasse principle for the family of projective cubic surfaces
\begin{equation}\label{eq:cubics}
f(x_0,x_1)=g(x_2,x_3),
\end{equation}
where $f,g\in \QQ[u,v]$ are binary cubic forms.  Our main result shows that a positive proportion of these surfaces, when ordered by height, possess a $\QQ$-rational point.  First, we discuss the question of local solubility.
The following result
will be addressed in
  \S \ref{s:local-er}.
\begin{theorem}\label{t:local}
Approximately  99\% of the cubic surfaces \eqref{eq:cubics}, when ordered by height, 
are everywhere locally soluble.
\end{theorem}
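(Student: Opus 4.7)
The strategy is to express the density of everywhere locally soluble pairs $(f,g)$ as an Euler product $\varrho = \prod_v \sigma_v$ over the places $v$ of $\QQ$, where $\sigma_v$ is the local density at $v$, and then to evaluate each factor. The real factor is immediate: every nonzero binary cubic form in $\RR[u,v]$ has a real root, takes both positive and negative real values, and hence surjects onto $\RR$. For any pair $(f,g)$ with $fg \ne 0$, choose $t \in \RR$ and solve $f(x_0,x_1) = g(x_2,x_3) = t$ over $\RR$ to produce a real point of the surface $S_{f,g}$. Hence $\sigma_\infty = 1$.

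For a prime $p$, let $\sigma_p$ denote the Haar measure of the set $\{(f,g) \in \ZZ_p^8 : S_{f,g}(\QQ_p) \ne \emptyset\}$. An elementary Jacobian computation shows that the reduction $S_{\bar f, \bar g}$ over $\FF_p$ is smooth precisely when $\disc(\bar f)\cdot\disc(\bar g)\ne 0$, in which case the Weil conjectures for cubic surfaces give $\# S_{\bar f,\bar g}(\FF_p) = p^2 + O(p^{3/2})$, and any smooth $\FF_p$-point lifts via Hensel's lemma to a $\QQ_p$-point. When exactly one of the discriminants vanishes modulo $p$ the reduction is still smooth (the partial derivatives of $f(x_0,x_1)-g(x_2,x_3)$ cannot vanish simultaneously at a nonzero point), and when both vanish there is a mild singularity but the surface typically still has many smooth $\FF_p$-points. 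The main technical step is the uniform tail bound
\[
\sigma_p = 1 - O(p^{-2}) \qquad \text{as } p \to \infty,
\]
obtained by showing that the locus of pairs admitting \emph{no} smooth $\FF_p$-point that lifts has codimension at least two in $\FF_p^8$.

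Given the tail estimate, a standard Ekedahl-type sieve yields the factorization $\varrho = \prod_v \sigma_v$. The remaining task is to bound $\prod_p \sigma_p$ below by approximately $0.99$. For each small prime $p$, the value of $\sigma_p$ can be computed essentially exactly by enumerating pairs $(f,g)$ modulo a bounded power $p^k$ and applying the Hensel-lifting criterion at each representative; the dominant contribution to the loss $1-\prod_p\sigma_p$ comes from the smallest primes (in particular $p=2,3$), while the convergent tail $\sum_{p > p_0}(1-\sigma_p) = O(p_0^{-1})$ is controlled by the uniform bound.

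The principal obstacle is the uniform tail estimate $\sigma_p = 1 - O(p^{-2})$: one must show that among pairs $(\bar f, \bar g)$ with both discriminants vanishing modulo $p$, only a subset of relative density $O(1)$ can fail to have a smooth $\FF_p$-point on $S_{\bar f,\bar g}$ which lifts, and one needs this with sufficient uniformity in $p$ to run the Ekedahl sieve. A secondary, purely mechanical, obstacle is the explicit small-prime computation needed to certify the $99\%$ figure; this must be done with enough care that the accumulated error from the tail, together with the small-prime contributions, leaves a product comfortably above $0.99$.
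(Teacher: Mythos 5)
Your outline follows the same high-level strategy as the paper: factor the density as a product of local densities $\prod_v \vartheta_v$, note that the real factor equals $1$, establish a uniform tail bound so that the product converges, and compute the small-prime contributions numerically to reach the $99\%$ figure. But the proposal leaves the two substantive steps unexecuted, and in one place the set-up is off.

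The paper does not run an Ekedahl sieve from scratch. It appeals to the general framework of Bright--Browning--Loughran \cite[Thm.~1.3]{wa}, which produces the Euler product once one verifies that the total space $X\subset\PP^7\times\PP^3$ has adelic points and that the fibre of $\pi:X\to\PP^7$ over every codimension-$1$ point of the base is geometrically integral. These are exactly the conditions (\ref{1}) and (\ref{2}) of \S\ref{s:bright}, and the codimension argument is the geometric observation you gesture at: the surface $f(x_0,x_1)=g(x_2,x_3)$ can fail to be geometrically integral only when it is singular, which forces $\disc(f)=\disc(g)=0$, a codimension-$2$ condition. Incidentally, your statement that the reduction is smooth \emph{precisely when} $\disc(\bar f)\cdot\disc(\bar g)\ne 0$ is misstated (and contradicted by your next sentence): a singular point needs \emph{both} $\nabla f$ and $\nabla g$ to vanish, so the surface is singular iff \emph{both} discriminants vanish, and smooth otherwise. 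With the codimension-$2$ statement in hand the tail bound $\vartheta_p=1-O(p^{-2})$ comes for free from the BBL machinery; in fact the paper's exact formula for $\vartheta_p$ shows the error is $O(p^{-4})$.

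The genuine gap is the computation of the local densities. The paper does not numerically enumerate pairs mod $p^k$; that would be delicate because the insoluble locus is cut out by conditions that require descending arbitrarily deep in $p$-adic valuation (for instance when all coefficients of $g$, or of some recursively obtained binary cubic, are divisible by $p$). Instead the paper mimics Bhargava--Cremona--Fisher \cite{local} and computes $\vartheta_p=1-\chi_p(1/p)$ \emph{exactly} as a rational function of $1/p$, by classifying the reduction types of $(\bar f,\bar g)$ (simple root / triple root / irreducible, together with the $p$-divisibility level $\xi\in\{0,1,2\}$ of $g$) and setting up closed recursive relations for the conditional solubility probabilities $\alpha_i(\xi)$, $\lambda$, $\rho$, $\omega$, $\gamma_i$, $\epsilon$, $\kappa$, $\kappa_1$. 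Solving these recursions gives the explicit $\chi_p(x)$ displayed in \S\ref{s:local-er}, and the numerical product $\prod_p\vartheta_p\approx 0.993782$ then certifies the $99\%$ claim. Without either carrying out that recursive analysis or specifying how deep a $p^k$ one must enumerate (and justifying termination), your proposal asserts the conclusion rather than proving it. The omission of the imprimitive cases ($g\equiv 0\bmod p$, etc.) is the key structural subtlety you would need to address.
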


It has long been known that the Hasse principle does not always hold for cubic surfaces and so local solubility  is not enough to ensure that the surface \eqref{eq:cubics} has a $\QQ$-rational point.  In the special case that  $f$ and $g$ are diagonal  there are many known counter-examples to the Hasse principle, the most famous being the surface
$$
5x_0^3+9x_1^3+10x_2^3+12x_3^3=0,
$$
that was discovered by Cassels and Guy \cite{CG}.
As explained by 
Manin \cite{manin}, this example is accounted for by the 
Brauer--Manin obstruction.
It has been conjectured by Colliot-Th\'el\`ene and Sansuc \cite[\S V]{ct}
 that  this obstruction explains all failures of the Hasse principe for  smooth cubic surfaces.   This conjecture remains wide open in general. However, work of Swinnerton-Dyer \cite{aens}  confirms it for a special  family of  diagonal cubic surfaces, conditionally under the assumption that the Tate--Shafarevich group of elliptic curves is finite.

A recent investigation of Bright \cite{hp} has focused on families of varieties over number fields that have no Brauer--Manin obstruction to the Hasse principle.
In \S \ref{s:bright} we shall check that the conditions of his main result are satisfied for the family of cubic surfaces \eqref{eq:cubics}, thereby leading to the following conclusion.

\begin{theorem}[Bright]\label{t:bright}
Assume  that the Brauer--Manin obstruction is the only obstruction to the Hasse principle for smooth cubic surfaces. Then 100\% of the cubic surfaces \eqref{eq:cubics}, when ordered by height,  satisfy the Hasse principle.
\end{theorem}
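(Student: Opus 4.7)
The plan is to apply the main theorem of Bright~\cite{hp} directly to the smooth projective family $\pi:\mathcal{X}\to\AA^8$ whose fibre over $(f,g)\in\AA^8$ is the cubic surface~\eqref{eq:cubics}, and then to conclude via the stated conjecture. Under the hypothesis of Theorem~\ref{t:bright}, the Hasse principle fails at $(f,g)$ precisely when $X_{(f,g)}(\AA_\QQ)\neq\emptyset$ yet $X_{(f,g)}(\AA_\QQ)^{\Br}=\emptyset$, so the conclusion amounts to showing that this phenomenon occurs on a set of density zero when $(f,g)$ is ordered by height. Bright's theorem is designed to produce precisely such a statement for families of varieties satisfying appropriate geometric hypotheses, and the work consists in verifying those hypotheses in our setting.

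The verification splits naturally into two parts. First, on the geometric side, I would check that $\pi$ is smooth and proper over the complement of the discriminant hypersurface $\Delta\subset\AA^8$ (the locus where $X_{(f,g)}$ acquires a singularity), that $\mathcal{X}$ is geometrically integral, and that the generic fibre $X_\eta$ is a smooth cubic surface over the function field $F=\QQ(t_1,\ldots,t_8)$. For smooth cubic surfaces one has $\Br(\overline{X_\eta})=0$, so $\Br(X_\eta)/\Br_0(X_\eta)\cong H^1(\Gal(\bar F/F),\Pic(\overline{X_\eta}))$, and $\Pic(\overline{X_\eta})\cong\ZZ^7$ is spanned by the classes of the 27 lines. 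Bright's hypotheses then translate into (i) control of the arithmetic monodromy action on $\Pic(\overline{X_\eta})$, together with (ii) an analysis of how Brauer elements behave at bad fibres over $\Delta$; the second part should parallel the discriminant computation already carried out for Theorem~\ref{t:local} in~\S\ref{s:local-er}.

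The main obstacle is input~(i). The space of all smooth cubic surfaces in $\PP^3$ has generic monodromy equal to $W(E_6)$ acting on the 27 lines, but our family sits in codimension $12$ inside that moduli space, and the equation~\eqref{eq:cubics} distinguishes a canonical grid of nine lines: each pair of linear factors $\ell\mid f$ and $m\mid g$ over $\bar\QQ$ yields a line $\{\ell=m=0\}\subset X_{(f,g)}$. The geometric monodromy group $G$ is therefore contained in the $W(E_6)$-stabiliser of this nine-line configuration, and visibly contains $S_3\times S_3$ permuting the three roots of $f$ and the three roots of $g$ independently. I would identify $G$ precisely, determine the induced Galois module structure on $\Pic(\overline{X_\eta})$, and verify the cohomological triviality required by~\cite{hp}. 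Once this is in place, Bright's theorem delivers the density-zero conclusion at once, which combined with the conjectural hypothesis yields Theorem~\ref{t:bright}.
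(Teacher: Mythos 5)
Your high-level strategy (invoke Bright's result from~\cite{hp} and verify its hypotheses) matches the paper, but you have misread which hypotheses actually need to be checked, and this sends you toward a much harder and unnecessary computation.

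Bright's Theorem~1.4 does \emph{not} ask you to compute $\Br(X_\eta)/\Br_0(X_\eta)$, and hence does not require identifying the geometric monodromy group or the Galois module structure of $\Pic(\overline{X_\eta})$. The Brauer-type hypothesis on the generic fibre is only that $\Br X_{\bar\eta}=0$, i.e.\ the \emph{geometric} Brauer group of the geometric generic fibre vanishes. For a smooth cubic surface over an algebraically closed field of characteristic zero this is automatic, since such a surface is rational. So the entire third paragraph of your proposal, which treats the monodromy action as ``the main obstacle'' and plans to identify the stabiliser of the nine-line grid inside $W(E_6)$, is solving a problem that does not arise. (The paper does discuss $\Br X_\eta/\Br K$ in a Remark~--- using a \texttt{magma} computation showing the Galois group on the 27 lines is the full $S_3\times S_3\times S_3$ for a sample surface, then citing Jahnel's tables~--- but explicitly flags that this is \emph{not} used in the proof.)

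Meanwhile, you have skipped the hypotheses that do require work. Bright's theorem also demands that $\H^1(\QQ,\Pic\bar X)=0$ and $\Br\bar X=0$ for the \emph{total space} $\bar X$, that $X(\A_\QQ)\neq\emptyset$, that $\H^2(\QQ,\Pic\PP^7)\to\H^2(\QQ,\Pic\bar X)$ is injective, and~--- crucially~--- that the fibre of $\pi$ over every codimension-$1$ point of the base is geometrically integral and that the fibre over every codimension-$2$ point has a geometrically reduced component. The conditions on the total space are dispatched in one line by noting that $X\subset\PP^7\times\PP^3$ projects to $\PP^3$ as a $\PP^6$-bundle, hence is $\QQ$-rational. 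The injectivity of the $\H^2$ map is a cited result of Bright--Browning--Loughran. The fibre conditions are where the genuine content of the verification lies: $X_P$ fails to be geometrically integral only if both $\disc(f)=0$ and $\disc(g)=0$, which is codimension $2$ in $\PP^7$ and so excluded from codimension-$1$ points; and a codimension-$2$ degeneration can only fail to have a geometrically reduced component when $f$ and $g$ are proportional with repeated roots, a codimension-$3$ condition. Your proposal gestures at ``an analysis of how Brauer elements behave at bad fibres'' but never makes these dimension counts, which are precisely what Bright's theorem needs. As written, the argument would not close.
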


There are very few results which establish the existence of $\QQ$-rational points on cubic surfaces unconditionally.
Given coprime $a,b\in \ZZ$, 
Heath-Brown and Moroz 
\cite[Cor.~1.2]{HB-M}
have shown that  any 
cubic surface
$$
x_0^3+2x_1^3+ax_2^3+bx_3^3=0
$$
has a $\QQ$-rational point, provided that   either 
$a  \equiv \{\pm 2, \pm 3\} \bmod{9}$, or 
$b  \equiv \{\pm 2, \pm 3\} \bmod{9}$, or 
$
a\equiv \pm b \bmod{9}.
$
This result combines sophisticated sieve methods, which allow primes to be represented by binary cubic polynomials, 
with a result of  Satg\'e \cite[Prop.~3.3]{satge},  which demonstrates that the
curve
$
u^3+2v^3=pz^3
$
has a $\QQ$-rational point for any prime  $p\equiv 2\bmod{9}$.

Our main result provides a much denser set of smooth cubic surfaces
which are in possession of a rational point.

\begin{theorem}\label{t:main}
A positive proportion of the cubic surfaces \eqref{eq:cubics}, when ordered by height,  possess a $\QQ$-rational point.
\end{theorem}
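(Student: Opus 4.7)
The plan is to produce, for a positive proportion of pairs $(f,g)$, an explicit $\QQ$-rational point on the surface $S = S_{f,g}$ of \eqref{eq:cubics}, by exhibiting a rational point on a conveniently chosen fibre of the projection $\pi: S \to \PP^1$, $[x_0:x_1:x_2:x_3] \mapsto [x_0:x_1]$. The fibre of $\pi$ over $[a:b] \in \PP^1(\QQ)$ is the plane cubic curve in $\PP^2$ (with coordinates $[x_2:x_3:w]$) cut out by
$$
g(x_2,x_3) = f(a,b)\, w^3,
$$
which, whenever $f(a,b) \neq 0$ and $g$ is non-degenerate, is a smooth genus-one curve; indeed, it is the cubic twist, by the class of $f(a,b)$, of the fixed curve $C_g: g(x_2,x_3) = w^3$. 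Accordingly, $S(\QQ) \neq \emptyset$ is equivalent to the existence of some $[a:b] \in \PP^1(\QQ)$ for which $f(a,b)$ lies in the image of $g$ in $\QQ^*/(\QQ^*)^3$.

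The heart of the argument is to invoke the recent positive-proportion theorems of Bhargava--Elkies--Shnidman and Kriz--Li on rational points in families of cubic twists of elliptic curves. Specialising the base point to $[a:b] = [1:0]$, so that $f(a,b)$ is just the leading coefficient of $f$, reduces the problem to showing that this leading coefficient lies in the image of $g$ modulo cubes. For suitable $g$---most directly the diagonal form $g(u,v) = u^3+v^3$, as treated by Alpöge--Bhargava--Shnidman---a positive proportion of integers, ordered by absolute value, belong to this image. Since the leading coefficient of $f$ is equidistributed on $[-H,H]$, independently of its remaining coefficients, as $f$ ranges over integral binary cubics of height at most $H$, this input would yield a positive proportion of pairs $(f,g)$ with a rational point on the fibre over $[1:0]$, and hence on $S$.

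The main obstacle is to upgrade the cubic-twist input from a single fixed $g$ to a positive-proportion family of $g$, since the cited works are stated most naturally for specific diagonal cases. This calls for a delicate adaptation of the underlying Selmer-group averaging techniques, coupled with uniform control on the local conditions satisfied by $g$---conditions which are closely tied to the local analysis carried out in \S\ref{s:local-er}. A secondary, more routine, issue is to check that the exceptional locus where $f$ or $g$ fails to be sufficiently generic (e.g.\ has vanishing leading coefficient, or non-separable discriminant) contributes negligibly to the density count.
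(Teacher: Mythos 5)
Your approach is genuinely different from the paper's, and the obstacle you flag at the end is a real gap that the paper sidesteps entirely with a simple but crucial observation. You treat the surface via the fibration to $\PP^1$ and try to solve a \emph{single} fibre $g(x_2,x_3) = f(a,b)\,w^3$, which turns the problem into one about cubic twists of a fixed curve by the values $f(a,b)$. This couples $f$ and $g$ together, and forces you into precisely the difficulty you acknowledge: the twist-family theorems of Kriz--Li and Bhargava--Elkies--Shnidman apply to a single Mordell curve $y^2 = x^3 + k$, not uniformly to a positive-proportion family of base curves $C_g$. There is no routine way to ``upgrade'' here; the Heegner-point input is very specific to the fixed CM curve, and you would be reproving a strong new theorem to carry this out.

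The paper avoids the coupling entirely. The key point, stated just after Theorem~\ref{t:main}, is that it suffices to show $C_f(\QQ)\neq\emptyset$ for a positive proportion of \emph{irreducible} $f\in V(\ZZ)$. Indeed if $(u:v:z)\in C_f(\QQ)$ and $(u':v':z')\in C_g(\QQ)$ with $zz'\neq 0$ (automatic once $f,g$ are irreducible), then scaling gives
\[
f(z'u,z'v) = (z')^3 z^3 = z^3 (z')^3 = g(zu',zv'),
\]
so $(z'u:z'v:zu':zv')$ lies on the surface. Since $f$ and $g$ range independently, a positive proportion $c$ of irreducible $f$ with $C_f(\QQ)\neq\emptyset$ yields at least $c^2$ (a positive proportion) of pairs $(f,g)$ for which the surface has a rational point. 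This reduces everything to a single statement about binary cubic forms and the genus-one curve $z^3=f(u,v)$, which the paper then attacks via the correspondence (from Bhargava--Elkies--Shnidman) between $\hat\phi$-Selmer elements of Mordell curves and locally soluble binary cubics, combined with Kriz--Li's rank-one density result and Davenport--Heilbronn's count of cubic forms. Your proposal, by contrast, never uses the product structure of the equation $f=g$, and consequently lands on a harder and currently out-of-reach cubic-twist problem. Without the decoupling observation the argument does not close.
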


Recall that 
the set of $\QQ$-rational points is 
Zariski dense  on 
the cubic surface
\eqref{eq:cubics} as soon as it is non-empty,
as proved by
 Segre \cite{segre}.

Theorem \ref{t:main} will be established in 
\S \ref{s:global}.
Our  proof 
follows a strategy of Bhargava  
\cite[\S 4]{manjul}, which was developed 
to show that a positive proportion of  plane cubic curves have a $\QQ$-rational point.
The key input is recent work of   Kriz and Li \cite{kriz}, concerning  the family of Mordell curves
$$
E_k:\quad y^2=x^3+k,
$$
for $k\in \ZZ$.  Using a Heegner point construction, Kriz and Li show that a positive proportion of these elliptic curves have  rank 1.
The curves $E_k$ have also been the focus of 
novel work by Bhargava, Elkies and Schnidman \cite{BES} and we shall  draw heavily on their investigation.  It follows from \cite[Thm.~8]{BES}, in particular, that 
at least 41.1\% of the curves $E_k$, for $k\in \ZZ$, have rank 1, 
but this is only achieved under the assumption that the Tate--Shaferevich group is finite, whereas the work of Kriz and Li is unconditional.

Associated to $E_k$ is an obvious $3$-isogeny $\phi$
and we shall monopolise on the correspondence   between  elements of the $\phi$-Selmer group  and locally soluble   binary cubic forms. 
Let $V(\ZZ)$ be the lattice of (integer-matrix) binary cubic forms 
$$
f(u,v)=au^3+3bu^2v+3cuv^2+dv^3,
$$
for $a,b,c,d\in \ZZ$. Associated to each $f\in V(\ZZ)$ is the genus $1$
plane curve
\begin{equation}\label{eq:Cf}
C_f=\{z^3=f(u,v)\}\subset \PP^2.
\end{equation}
In order to establish Theorem \ref{t:main},  it will suffice to show that there is a positive proportion of irreducible $f\in V(\ZZ)$, when ordered by height, such that  $C_f(\QQ)\neq \emptyset$.  To see this we first note that, since  $f$ is irreducible,  
we must have $z\neq 0$ in any rational point
$(u:v:z)\in C_f(\QQ)$.  Now if the binary cubic forms $f,g\in V(\ZZ)$ give rise to the rational points 
$(u:v:z)\in C_f(\QQ)$ and $(u':v':z')\in C_g(\QQ)$, with $zz'\neq 0$, then it follows that the 
rational point $(z'u:z'v:zu':zv')\in \PP^3(\QQ)$ lies on the cubic surface \eqref{eq:cubics}.

\begin{remark}
It is also possible to say something about appropriate K3 surfaces. Consider the 
family of Kummer surfaces with affine equation
\begin{equation}\label{eq:kummer}
z^2=g_1(x)g_2(y),
\end{equation}
where $g_1,g_2$ are irreducible quartic polynomials defined over  $\QQ$.
Recent work of Harpaz and Skorobogatov \cite{harpaz} establishes the Hasse principle for these surfaces, provided that $g_1,g_2$ satisfy suitable genericity conditions, albeit under an assumption about the finiteness of 
certain associated  Tate--Shaferevich groups.
It has been shown by Bhargava \cite[Thm.~11]{manjul} that a positive proportion of the genus 1 curves
$
y^2=g(u,v)
$
possess a $\QQ$-rational point, 
when the binary quartic forms $g\in \QQ[u,v]$
are ordered by height. An immediate consequence of this
is the unconditional statement that 
a positive proportion of the Kummer surfaces \eqref{eq:kummer}, when ordered by height,  possess a $\QQ$-rational point. 
\end{remark}

\begin{ack}
The author is very grateful to Tim Dokchitser,  Adam Morgan,  Jack Thorne
and Olivier Wittenberg for useful conversations, and to the anonymous referee for several helpful comments.
Thanks are also due to J\"org Jahnel for help with Remark \ref{rem:JJ}.
While working on this paper the   author was
supported by ERC grant \texttt{306457}.
\end{ack}

\section{The family of cubic surfaces and local solubility}\label{s:local}	

Let  $X\subset \PP^7\times \PP^3$ be the biprojective hypersurface defined by 
$$
ax_0^3+
bx_0^2x_1+cx_0x_1^2+dx_1^3=ex_2^3+fx_2^2x_3+gx_2x_3^2+hx_3^3.
$$
We obtain a flat surjective morphism $\pi: X\to \PP^7$ by projecting to the point $(a:b:\dots:h)$, with $X$ smooth, projective and geometrically integral.  
We denote by $X_P$ the fibre above any point $P=(a:b:\dots:h)\in \PP^7(\QQ)$.
Let $\eta: \Spec(K)\to \PP^7$ denote the 
generic point and $\bar \eta: \Spec(\bar K) \to \PP^7$
a geometric point
above $\eta$, where $K$ is the function field of $\PP^7$ and $\bar K$ is an algebraic closure of $K$.
The geometric generic fibre $X_{\bar \eta}$ is a smooth cubic surface and so it is  connected and has torsion-free
Picard group. Denote by $\bar X$ the base change of $X$ to an algebraic closure $\bar \QQ$ of
$\QQ$ and let $\A_\QQ$ denote the ring of ad\`eles of $\QQ$. 

\subsection{Proof of Theorem \ref{t:bright}}\label{s:bright}

The statement  follows from  \cite[Thm.~1.4]{hp}, provided that the following hypotheses are met:
\begin{enumerate}
\item \label{1}
$X(\A_\QQ) \neq  \emptyset$;
\item \label{2}
the fibre of $\pi$ at each codimension-1 point of $\PP^7$
is geometrically integral;
\item \label{3}
the fibre of $\pi$ at each codimension-2 point of $\PP^7$
has a geometrically reduced
component;
\item \label{4}
$\H^1(\QQ,\Pic \bar X) = 0$;
\item \label{5}
$\Br \bar X $ is trival;
\item\label{6}
$\H^2(\QQ,\Pic \PP^7)\to \H^2(\QQ, \Pic \bar X)$
is injective;
\item \label{7}
$\Br X_{\bar \eta}=0$.
\end{enumerate}
Since $X$ is $\QQ$-rational the conditions (\ref{1}), (\ref{4}) and (\ref{5}) are automatically met. 
The cubic surface $X_P$ has equation $f(x_0,x_1)=g(x_2,x_3)$ for suitable binary cubic forms $f $ and $g$. If $X_P$ fails to be be geometrically integral then it must be singular, which can only happen when $\disc(f)=\disc(g)=0$. This is a codimension-2 condition, which thereby establishes (\ref{2}). The only way that (\ref{3}) can fail is if $f$ and $g$ are proportional and both have repeated roots. But this is a codimension-3 condition on the set of  coefficients.
Condition (\ref{6}) follows from \cite[Prop.~5.17]{wa}.
Finally, $X_{\bar \eta}$ is a smooth cubic surface over an algebraically closed field of characteristic $0$. Thus it is rational and it follows that 
$\Br X_{\bar \eta}=0$, as required for  condition (\ref{7}).
This completes the proof of Theorem \ref{t:bright}.

\begin{remark}\label{rem:JJ}
Although we shall not need it in our work, it is possible to show that 
$\Br X_\eta/\Br K=0$ for the family of cubic surfaces considered here, 
using an argument suggested to us by J\"org Jahnel.
According to  Swinnerton-Dyer \cite{SD93},
there is no Brauer--Manin obstruction whenever the action of $\Gal(\bar \QQ/\QQ)$ on the $27$ lines is as large as possible. 
A binary cubic form splits into 3 linear forms, generically by an   $S_3$-extension. Thus, generically, there is an $S_3\times S_3$-extension $k/\QQ$, such that the surface may be written in the classical Cayley--Salmon form
$l_1l_2l_3 = l_4l_5l_6 $, for binary linear forms $l_1,l_2,l_3\in k[x_0,x_1]$ and 
$l_4,l_5,l_6\in k[x_2,x_3]$.
There are 9 obvious $k$-rational lines, given by $l_i=l_j=0$ for $i\in \{1,2,3\}$ and $j\in \{4,5,6\}$,  and 6 obvious tritangent $k$-rational planes. The latter form a pair of Steiner trihedra, both of which are defined over $\QQ$. The group $S_3 \times  S_3 \times S_3$ is the maximal Galois group stabilising two such Steiner trihedra. 
Consider now the cubic surface $S\subset \PP^3$ with equation
$$
3x_0^3 + 7x_0^2x_1 + 11x_0x_1^2 + 13x_1^3=
17x_2^3 + 19x_2^2x_3 + 23x_2x_3^2 + 31x_3^3.
$$
A calculation in \texttt{magma} shows that the Galois group operating on the 27 lines is 
precisely  the maximal possible group $S_3 \times S_3 \times S_3$.
Since the Galois group has order 216, we may consult the appendix in work of Jahnel \cite{jahnel} to conclude that $\Br S/\Br\QQ=0.$ This suffices to show the claim.

\end{remark}

\subsection{Proof of Theorem \ref{t:local}: initial steps} 
\label{s:local-er}
Conditions 
(\ref{1}) and (\ref{2}) in \S \ref{s:bright} are sufficient to ensure that all of the hypotheses of \cite[Thm.~1.3]{wa} are met. Thus  it follows that 
$$
\lim_{H\to \infty}\frac{\#\{(a,b,\dots,h)\in (\ZZ\cap [-H,H])^8: X_{(a:b:\dots:h)}(\A_\QQ)\neq \emptyset  \}}{
(2H)^8}
=
\prod_v \vartheta_v>0,
$$
where
$\vartheta_v$ is the density of cubic surfaces $X_P$ which have $\QQ_v$-points, for each place $v$ of $\QQ$.  
Clearly $\vartheta_\infty=1$. We shall show that 
$
\vartheta_p=1-\chi_p(1/p)
$
for a suitable rational function $\chi_p\in \QQ(x)$.
Let 
$$
Q(x)=
9(1+x+x^2+x^3+x^4)^2(1+x+x^2)(1-x+x^2)(1-x^2)
$$
Then we shall show that 
$$
\chi_p(x)=\frac{(1+x^4)(1+x^2)x^4}{Q(x)}
$$
when $p\equiv 1 \bmod{3}$, while if $p\not \equiv 1 \bmod{3}$ then
$$
\chi_p(x)
\hspace{-0.1cm}
=
\hspace{-0.1cm}
\frac{P(x)x^4
}{
(3-2x^{10})(3-2x^5 )
(1-x+x^2-x^3+x^4)
(1-x^2+x^4)(1+x^2)^3
Q(x)},
$$
where 
\begin{align*}
P(x)=~&
4x^{35} - 4x^{34} + 16x^{33} - 16x^{32} + 56x^{31} - 58x^{30} + 126x^{29} - 132x^{28} \\
&+ 228x^{27} 
- 236x^{26} + 292x^{25} - 314x^{24} + 342x^{23} - 378x^{22}
+ 342x^{21}\\& - 361x^{20} + 197x^{19} - 184x^{18} - 68x^{17} + 110x^{16} - 293x^{15} + 336x^{14} \\ &- 489x^{13}
+ 543x^{12} - 669x^{11} + 702x^{10} - 639x^9 + 639x^8 - 477x^7 \\&+ 450x^6 
- 315x^5 + 306x^4 - 180x^3 + 180x^2 - 45x + 45.
\end{align*}
We note that
$1-\chi_3(1/3)=
0.9965901\dots.
$ 
Inserting our expressions for $\chi_p(1/p)$ into a computer
and taking a product over primes $\leq 10,000$ we find that 
$$
\prod_{\substack{p\leq 10^4\\ p\equiv 1 \bmod{3}}} \vartheta_p=
0.9973776\dots
\quad \text{ and } 
\quad
\prod_{\substack{p\leq 10^4\\ p\equiv 2 \bmod{3}}} \vartheta_p=
0.9998049\dots.
$$
Combining these, one verifies  that  $\prod_p \vartheta_p$ has numerical value
$0.993782\dots$, which is satisfactory for  Theorem \ref{t:local}.

Our calculation of  $\vartheta_p$, for a given prime $p$, is based on work of Bhargava, Cremona and Fisher \cite{local}, who perform the same sort of  calculation for the family of all ternary cubic forms.  
We shall say that a binary cubic form over $\FF_p$ is diagonal if it takes the shape $au^3+bv^3$ for $a,b\in \FF_p^*$.
We  begin by recording the following result.
\begin{lemma}\label{lem:manjul}
\begin{enumerate}
\item
The probability that a random monic cubic over $\FF_p$ has a simple root in $\FF_p$ is 
$\sigma_1=\frac{2}{3}(p^2-1)/p^2$, and the probability that it has a triple root is $\tau_1=1/p^2$.
\item
The probability that a random primitive binary cubic form over $\FF_p$ has a simple root in $\PP^1(\FF_p)$ is 
$\sigma_*=\frac{1}{3}p(2p+1)/(p^2+1)$, and the probability that it has a triple root is $\tau_*=1/(p^2+1)$. 
\item
The probability that a random diagonal binary cubic form over $\FF_p$ has a simple root in $\PP^1(\FF_p)$ is 
$$
\sigma_2
=\begin{cases}
\frac{1}{3} & \text{ if $p\not \equiv 2\bmod{3}$,}\\
1 & \text{ if $p\equiv 2\bmod{3}$.}
\end{cases}
$$

\end{enumerate}
\end{lemma}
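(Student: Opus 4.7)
The plan is to enumerate binary cubic forms over $\FF_p$ by their factorization type and read off each probability directly from the resulting counts.

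For part (1), I would classify the $p^3$ monic cubics $x^3+ax^2+bx+c \in \FF_p[x]$ according to the five possible splitting types over $\FF_p$: three distinct linear factors; one double and one distinct simple linear factor; a triple linear factor; a linear times an irreducible quadratic; or an irreducible cubic. Basic combinatorics gives counts $\binom{p}{3}$, $p(p-1)$, $p$, $p(p^2-p)/2$, and $(p^3-p)/3$ respectively, whose sum is $p^3$ (a useful sanity check). A simple $\FF_p$-rational root occurs precisely in the first, second, and fourth types, while a triple root occurs only in the third. Summing the relevant counts and dividing by $p^3$ yields $\sigma_1$ and $\tau_1$.

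For part (2), the same strategy works projectively. The nonzero binary cubics modulo scalars form $\PP^3(\FF_p)$, of size $(p+1)(p^2+1)$, and split into five analogous types according to the Galois action on their three roots in $\PP^1(\bar\FF_p)$: three distinct $\FF_p$-rational roots; an $\FF_p$-rational double plus a distinct $\FF_p$-rational simple; an $\FF_p$-rational triple; one $\FF_p$-rational with a conjugate pair in $\PP^1(\FF_{p^2})\setminus \PP^1(\FF_p)$; or a Galois-conjugate triple in $\PP^1(\FF_{p^3})\setminus \PP^1(\FF_p)$. Possession of a simple $\FF_p$-rational root is a scale-invariant condition, so the projective probability coincides with that for a uniformly random nonzero (equivalently, primitive) form. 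Totalling the classes admitting a simple $\FF_p$-rational root (the first, second, and fourth types) and dividing by $(p+1)(p^2+1)$ produces $\sigma_*$, while the triple-root count $p+1$ yields $\tau_*$.

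For part (3), the $\bar\FF_p$-roots of $au^3+bv^3 = a(u^3+(b/a)v^3)$ are precisely the cube roots of $-b/a$ in $\bar\FF_p^*$. If $p \equiv 2 \bmod 3$, then cubing is a bijection of $\FF_p^*$, so each diagonal form has exactly one $\FF_p$-rational cube root—necessarily simple, since the other two are Galois conjugates in $\FF_{p^2}$—and hence $\sigma_2 = 1$. If instead $p \equiv 1 \bmod 3$, then a primitive cube root of unity lies in $\FF_p$; thus $-b/a$ is a cube in $\FF_p^*$ with probability $1/3$ (in which case the form factors into three distinct $\FF_p$-linear factors, all roots simple) and otherwise admits no root in $\PP^1(\FF_p)$, giving $\sigma_2 = 1/3$. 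The only real obstacle throughout is bookkeeping: ensuring the partition by factorization type is exhaustive, that the type-counts sum to the stated total, and that a form with a double or triple root is not mistakenly counted as having a simple one.
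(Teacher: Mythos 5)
Your direct combinatorial enumeration is correct for parts (1) and (2) and is a genuinely different route from the paper: the paper simply cites Corollaries 4 and 6 of Bhargava--Cremona--Fisher for the monic and the (not-necessarily-primitive) binary cubic counts, and then derives $\sigma_*$ and $\tau_*$ by the rescaling $\sigma_*=\sigma\,p^4/(p^4-1)$, $\tau_*=\tau\,p^4/(p^4-1)$, using the fact that primitive forms occupy a fraction $1-1/p^4$ of $V(\ZZ_p)$. Your approach buys a self-contained argument at the cost of a slightly longer case analysis; the paper's buys brevity at the cost of an external reference plus a one-line normalization. Your partition into five factorization types is exhaustive and the counts check out (their sum is $p^3$, resp.\ $(p+1)(p^2+1)$), and the simple-root tally $\frac{2}{3}p(p^2-1)$, resp.\ $\frac{1}{3}p(p+1)(2p+1)$, gives exactly $\sigma_1$, $\sigma_*$.

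For part (3) there is a real gap: you split into $p\equiv 2\bmod 3$ and $p\equiv 1\bmod 3$, but the case $p\not\equiv 2\bmod 3$ in the lemma also includes $p=3$, which your argument never addresses. Moreover, at $p=3$ the conclusion as literally stated fails for your approach: in characteristic $3$ the form $au^3+bv^3$ is the perfect cube $a(u+cv)^3$ with $c^3=b/a$, so it \emph{never} has a simple root in $\PP^1(\FF_3)$, and a direct count yields probability $0$, not $1/3$. The paper's proof sidesteps this by redefining the quantity being computed: ``a random element of $\ZZ_p^*$ is a cube in $\QQ_p^*$ with probability $\sigma_2$.'' That $\ZZ_p^*$-cube probability \emph{is} $1/3$ at $p=3$ (since $\ZZ_3^*/(\ZZ_3^*)^3$ has order $3$), and it is this $p$-adic quantity, not the residue-field simple-root probability, that is actually used in the downstream density calculations. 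You should either restate part (3) in terms of $p$-adic cubes and verify the $p=3$ case separately, or note explicitly that for $p\neq 3$ the two interpretations coincide and treat $p=3$ by the $\ZZ_3^*$-cube computation.
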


\begin{proof}
Part (1) is proved in \cite[Cor.~4]{local}. 
According to  
\cite[Cor.~6]{local},
the probability that a random binary cubic form over $\FF_p$ has a simple root in $\PP^1(\FF_p)$ is 
$\sigma=\frac{1}{3}(p^2-1)(2p+1)/p^3$, and the probability that it has a triple root is $\tau=(p^2-1)/p^4$. 
Part (2) therefore follows on noting that  $\sigma_*=\sigma p^4/(p^4-1)$ and 
$\tau_*=\tau p^4/(p^4-1)$. 
Consider now the special case of diagonal binary cubic forms over $\FF_p$.
The lemma follows on noting that a random element of $\ZZ_p^*$ is a cube in $\QQ_p^*$ with probability $\sigma_2$.
\end{proof}

We now proceed with our calculation of the density $\vartheta_p$ for a given prime $p$.
Over $\QQ_p$, 
cubic surfaces of the shape \eqref{eq:cubics} are determined by pairs $(f,g)$ of binary cubic forms  $f\in \ZZ_p[u,v]$ and $g\in \ZZ_p[x,y]$, where
\begin{align*}
f(u,v)&=c_0u^3+c_1u^2v+c_2uv^2+c_3v^3,\\
g(x,y)&=d_0x^3+d_1x^2y+d_2xy^2+d_3y^3.
\end{align*}
We say that the surface $(f,g)$ is soluble over $\QQ_p$ if there exist $u,v,x,y\in \ZZ_p$, with 
$\min\{v_p(u),v_p(v),v_p(x),v_p(y)\}=0$,
such that $f(u,v)=g(x,y)$. We say that a binary cubic form over $\ZZ_p$ is primitive if not all of its coefficients are divisible by $p$. 
We are only interested in surfaces $(f,g)$ defined over $\ZZ_p$ for which $f$ or $g$ is primitive.

Mimicking  \cite{local}, we shall say that the surface $(f,g)$ has valuations
\begin{align*}
&\geq  \phi_0 ~\geq \phi_1 ~\geq \phi_2 ~\geq \phi_3\\
&\geq \gamma_0 ~\geq \gamma_1 ~\geq \gamma_2 ~\geq \gamma_3,
\end{align*}
if we happen to know that $v_p(c_i)\geq \phi_i$ and $v_p(d_i)\geq \gamma_i$, for $0\leq i\leq 3$. Likewise, we shall replace the inequality symbol with  equality  if and only if we  know the exact $p$-adic valuation of the relevant coefficient. 
We may clearly assume without loss of generality that  $
\min_{0\leq i\leq 3} v_p(c_i)=0$ (i.e. $f$ is primitive).
Through a change of variables, furthermore,  we will also be able to restrict attention to the case where
$\min_{0\leq i\leq 3} v_p(d_i)
=\xi\in \{0,1,2\}$, in which case we write $g=p^{\xi}\tilde g$ for a primitive form $\tilde g$.

We shall examine the solubility of $(f,g)=(f,p^{\xi}\tilde g)$ by considering its reduction modulo $p$. 
Let  $\bar f$ (respectively, $\bar g$) be the reduction of $f$ (respectively, $\tilde g$) modulo $p$. 
Whenever $\bar f$ 
has a simple root over $\FF_p$, then this root can be lifted to a root over $\ZZ_p$ and it follows that the cubic surface $(f,g)$ is soluble over $\QQ_p$.  
We claim the same is true when 
$\bar g$ has a simple root. This is obvious when $\xi=0$ and so we suppose that   $\xi>0$.  In this case 
we merely substitute $pu,pv$ for $u,v$ and divide by $p^{\xi}$ to  obtain 
a new cubic surface $(p^{3-\xi}f,\tilde g)$, which is plainly soluble over $\QQ_p$.

Suppose now that $\bar f$ does not have any  $\FF_p$-roots and that $\bar g$ doesn't have a simple root. Then, either $\xi=0$, in which case $(\bar f,\bar g)$ is absolutely irreducible and it follows that the surface has a smooth $\FF_p$-point which can be lifted, or else $\xi>0$. But in the latter case, substituting  $pu,pv$ for $u,v$ and dividing by $p^{\xi}$, we obtain 
a new cubic surface $(p^{3-\xi}f,\tilde g)$.
If $\bar g$ does not have any $\FF_p$-roots then certainly $(f,g)$ is not soluble over $\QQ_p$. 
Next, suppose that 
$\bar f$ has a triple root and $\bar g$ has no $\FF_p$-roots. If $\xi=0$ then once again the surface $(\bar f,\bar g)$ has a  smooth $\FF_p$-point which can be lifted to ensure solubility over $\QQ_p$.

For each $\xi\in \{0,1,2\}$, 
we define  the following probabilities:
\begin{itemize}
\item
$\alpha_1(\xi)$ is the probability 
of solubility for a pair $(f,p^{\xi}\tilde g)$ such that both $\bar f$ and $\bar g$ have a triple root;
\item
$\alpha_2(\xi)$ is the probability 
of solubility for a pair $(f,p^{\xi}\tilde g)$ such that $\bar f$  has a triple root and 
 $\bar g$ is insoluble;
\item
$\alpha_3(\xi)$ is the probability 
of solubility for a pair $(f,p^{\xi}\tilde g)$  such that 
 $\bar f$ is insoluble and 
$\bar g$  has a triple root;
\item
$\alpha_4(\xi)$ is the probability 
of solubility for a pair $(f,p^{\xi}\tilde g)$ such that $\bar f$ and $\bar g$  are both insoluble.
\end{itemize}
Our work above already implies that 
$$
\alpha_2(0)=\alpha_3(0)=\alpha_4(0)=1 \quad \text{ and } \quad
\alpha_4(1)=\alpha_4(2)=0.
$$
Next we claim that 
$
\alpha_3(\xi)=\alpha_2(3-\xi)$ for $\xi\in \{1,2\}$.
Indeed, over $\FF_p$, the only solutions have $u=v=0$. Substituting $pu$ for $u$ and $pv$ for $v$ and dividing by $p^2$ we are left with the cubic surface $(p^{3-\xi} f,\tilde g)$. This establishes the claim, implying that 
$$
\alpha_2(\xi)+\alpha_3(\xi)=\alpha_2(\xi)+\alpha_2(3-\xi)=
\alpha_2(1)+\alpha_2(2),
$$
for any $\xi\in \{1,2\}$.

We now return to our calculation of $\vartheta_p$. 
For each $1\leq i\leq 4 $, we let $\beta_i(\xi)$
be the probability that our pair of cubic forms $(f,g)=(f,p^\xi \tilde g)$ has the reduction type considered in the definition of $\alpha_i(\xi)$.
Let $S\subset \ZZ_p^4$ be a set of coefficients producing primitive binary cubic forms. 
Then for any  $\xi\in \{0,1,2\}$, the probability that 
a binary cubic form takes the shape $p^{k}\tilde g$ 
for $k\equiv \xi \bmod{3}$, 
where  $\tilde g$ has coefficient vector in $S$,   is 
$$
\sum_{k\equiv \xi\bmod{3}}
\frac{1}{p^{4k}}
\mu(S)=
\frac{1}{p^{4\xi}}\left(1-\frac{1}{p^{12}}\right)^{-1}\mu(S),
$$
where $\mu(S)$ is the probability that a primitive binary cubic form has coefficient vector in  $S$.
It now follows from  Lemma \ref{lem:manjul} that
$\beta_i(\xi)=p^{-4\xi}\beta_i/
(1-1/p^{12})$, 
for $1\leq i\leq 4$, where
\begin{align*}
\beta_1=\tau_*^2, 
\quad
\beta_2=\beta_3=\tau_* (1-\sigma_*-\tau_*),
\quad
\beta_4=(1-\sigma_*-\tau_*)^2.
\end{align*}
Hence
we deduce that 
the probability of insolubility of a cubic surface \eqref{eq:cubics} with coefficients in $\QQ_p$ is equal to
\begin{equation}\label{eq:goal}
\begin{split}
1 - \vartheta_p 
=~& 
\hspace{-0.2cm}
\sum_{\xi\in \{0,1,2\}}
\sum_{1\leq i\leq 4}
\beta_i(\xi)(1 - \alpha_i(\xi))\\
= ~&
\frac{\tau_*^2}{1-1/p^{12}}
\left(1 - \alpha_1(0)
+\frac{1-\alpha_1(1)}{p^4}
+\frac{1-\alpha_1(2)}{p^8}\right)\\
&+
(2- \alpha_2(1)- \alpha_2(2))
\frac{(1+1/p^4)\tau_*(1-\sigma_*-\tau_*)}{p^4(1-1/p^{12})}\\
 &+
  \frac{(1-\sigma_*-\tau_*)^2(1+1/p^4)}{p^4(1-1/p^{12})}.
\end{split}
\end{equation}
It remains to analyse $\alpha_1(\xi)$ for $\xi\in \{0,1,2\}$ and $\alpha_2(1), \alpha_2(2)$. In doing so, 
when the reduction of the form has a triple point, 
the density will not depend on what this triple point is and so  
we shall always take it to be $(1 : 0)$.

In the special case $\alpha_1(1)$, which corresponds to surfaces $(f,p\tilde g)$, where
$\bar f$ and $\bar g$ both have triple roots, we may  assume that $\bar f$ and $\bar g$ both have the triple root $(1:0)$. 
Substituting $pv$ for $v$ and dividing by $p$, it is easily seen  that 
 $\alpha_1(1)$ is equal to the probability of solubility over $\QQ_p$ for a cubic surface with valuations 
\begin{align*}
&\geq 0 ~\geq 1 ~\geq 2 ~= 2\\
&\geq 1 ~\geq 1 ~\geq 1 ~= 0,
\end{align*}
in which one is only interested in solutions with  $\min\{v_p(u),v_p(x)\}=0$.

As in \cite{local}, we will  develop recursions in order to solve for the densities of soluble
cubics among those whose reductions produce the bad configurations outlined above.

\subsection{Technical lemmas}

The purpose of this section is to collect together definitions and calculations of particular probabilities that will feature in our analysis.

\begin{lemma}\label{lem:lambda}
Let 
$\lambda$ be 
the probability that a cubic surface $(f,g)$  with  valuations 
\begin{align*}
&=  2 ~\geq 2 ~\geq 2 ~= 1\\
&\geq 0 ~\geq 0 ~\geq 1 ~= 1,
\end{align*}
is soluble over $\ZZ_p$ with $x\in \ZZ_p^*$. 
Then 
$$
\lambda=
\left(1-\frac{1-\sigma_2}{p^5}\right)^{-1}
\left(
1-\frac{1}{p}+\frac{1}{p^2}-\frac{(1-\sigma_2)^2}{p^4}
-\frac{(1-\sigma_2)\sigma_2}{p^5}\right).
$$
\end{lemma}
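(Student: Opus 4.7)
The plan is a $p$-adic density computation in the style of \cite{local}, culminating in a fixed-point equation for $\lambda$. Since $x\in\ZZ_p^*$, homogeneity lets me set $x=1$ and write $y=t$; solubility then amounts to the existence of $(u,v,t)\in\ZZ_p^3$ satisfying
$$p^2\tilde f(u,v) + pBv^3 \;=\; D_0 + D_1 t + pE_2 t^2 + pE_3 t^3,$$
where $\tilde f(u,v)=A_0u^3+A_1u^2v+A_2uv^2$, with $A_0,B,E_3\in\ZZ_p^*$ and $A_1,A_2,D_0,D_1,E_2$ uniform in $\ZZ_p$.

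Reducing modulo $p$ gives $D_0+D_1 t\equiv 0\pmod p$, which produces a three-way split. (i) If $\bar D_1\neq 0$ (probability $1-1/p$), Hensel's lemma uniquely solves for $t\in\ZZ_p$ given any $(u,v)$, forcing solubility. (ii) If $\bar D_1=0$ and $\bar D_0\neq 0$ (probability $(p-1)/p^2$), the mod-$p$ congruence is unsatisfiable and we get insolubility. (iii) If $\bar D_0=\bar D_1=0$ (probability $1/p^2$), one writes $D_0=pD_0'$, $D_1=pD_1'$, divides through by $p$, and passes to
$$p\tilde f(u,v)+Bv^3 \;=\; P(t), \qquad P(t):=D_0'+D_1't+E_2 t^2+E_3 t^3,$$
whose reduction mod $p$ is the plane curve $\bar B\bar v^3\equiv \bar P(\bar t)\pmod p$.

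A mod-$p$ point $(\bar v_0,\bar t_0)$ of the latter admits a Hensel lift iff either $\bar v_0\neq 0$ (so $\partial_v=3B\bar v_0^2$ is a unit, with $p=3$ treated separately) or $\bar t_0$ is a simple root of $\bar P$ (so $\partial_t=-\bar P'(\bar t_0)$ is a unit). Thus case (iii) yields solubility whenever either event (A) \emph{``$\bar P$ has a simple $\FF_p$-root''} or event (B) \emph{``some $\bar P(\bar t)/\bar B$ lies in the cube subgroup of $\FF_p^*$''} occurs. Using Lemma~\ref{lem:manjul} together with classical cube-residue statistics over $\FF_p$ (where $\sigma_2$ enters as the density of cubes in $\ZZ_p^*$), one computes $\Pr(A)$, $\Pr(B)$ and $\Pr(A\cap B)$; inclusion--exclusion then produces the correction terms $(1-\sigma_2)^2/p^4$ and $(1-\sigma_2)\sigma_2/p^5$ in the numerator of $\lambda$.

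The residual bad event, in which both (A) and (B) fail, is handled by a blow-up: after translating $t$ to the (necessarily triple) repeated root of $\bar P$, the substitution $v\mapsto pv'$, $t\mapsto pt'$ rescales the system to one whose $\ZZ_p$-solubility density is again $\lambda$, once one has checked that the coefficient distributions match up. The Haar measure loss across this substitution works out to $p^{-5}$, so the recursion contributes $\lambda\cdot(1-\sigma_2)/p^5$, yielding
$$\lambda \;=\; \Bigl(1-\tfrac{1}{p}+\tfrac{1}{p^2}-\tfrac{(1-\sigma_2)^2}{p^4}-\tfrac{(1-\sigma_2)\sigma_2}{p^5}\Bigr) + \frac{1-\sigma_2}{p^5}\,\lambda,$$
from which the stated closed form follows. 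The main technical obstacle is the combinatorial bookkeeping in case (iii): enumerating the reduction types of $\bar P$ (irreducible cubic, double-plus-simple root, triple root, and so on), tracking for each the joint distribution of its $\FF_p$-values modulo the cube subgroup of $\FF_p^*$, and verifying that the blow-up substitution reproduces the original coefficient distributions exactly, so that the $p^{-5}$ recursion factor is correct. This is intricate but routine, and closely parallels the calculations carried out for ternary and binary cubic forms in \cite{local}.
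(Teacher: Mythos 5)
Your reduction to the three cases (i), (ii), (iii) on $(\bar D_0,\bar D_1)$ matches the paper's opening moves and is correct, and your identification that within case (iii) the relevant mod-$p$ object is the curve $\bar B\bar v^3=\bar P(\bar t)$ is also right. However, there is a genuine gap in the heart of the argument, concentrated in the blow-up step.

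After translating the triple root of $\bar P$ to the origin and substituting $v\mapsto pv'$, $t\mapsto pt'$, then dividing by $p$, the resulting system does \emph{not} have the original valuation pattern, and so its solubility density is \emph{not} $\lambda$. Concretely, one lands on coefficient valuations $=0,\geq1,\geq2,=2$ on the $f$-side and $\geq0,\geq1,\geq2,=2$ on the $g$-side, which is a different pattern from $=2,\geq2,\geq2,=1$ and $\geq0,\geq0,\geq1,=1$. At this point one must split further: if the new constant term on the $g$-side is a unit (probability $1-1/p$), the reduced equation is diagonal $\bar A_0\bar u^3=\bar G_0$ and contributes solubility with probability $\sigma_2$ \emph{without} recursing; only if that constant term is divisible by $p$ (probability $1/p$) does a second substitution $u\mapsto pu'$ followed by dividing by $p$ return you to the original valuation pattern and hence to density $\lambda$. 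This extra conditioning is where the missing factor of $p^{-1}$ in the recursion coefficient comes from: the recursion factor is $p^{-2}\cdot\tau_1\cdot(1-\sigma_2)\cdot p^{-1}=(1-\sigma_2)/p^5$, not a "Haar measure loss of $p^{-5}$" from the single substitution $v,t\mapsto pv,pt$ (which by itself accounts for only $p^{-4}\cdot(1-\sigma_2)$).

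Because of this, the inclusion--exclusion claim for the numerator is also off. The non-recursive part of the probability inside case (iii) is not $\Pr[(A)\cup(B)]$; it is strictly larger (for $\sigma_2<1$), because the blow-up branch itself contributes solubility with probability $(1-1/p)\sigma_2$ before any recursion happens. Indeed one computes $\Pr[(A)\cup(B)]=1-\tau_1(1-\sigma_2)$, whereas the correct contribution is $1-\tau_1(1-\sigma_2)\bigl(1-\frac{(p-1)\sigma_2}{p}\bigr)=1-\frac{(1-\sigma_2)^2}{p^2}-\frac{(1-\sigma_2)\sigma_2}{p^3}$; the extra term $\tau_1(1-\sigma_2)\frac{(p-1)\sigma_2}{p}$ is exactly the within-blow-up solubility that your scheme omits. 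So both correction terms in the numerator cannot be obtained by inclusion--exclusion on $(A)$ and $(B)$ alone. (Your observation that "irreducible $\bar P$ implies event $(B)$" is correct and in effect the same Weil-bound argument the paper uses for the irreducible subcase, but it does not rescue the inclusion--exclusion bookkeeping.) To repair the proof, replace the $(A)/(B)$ framework inside case (iii) with the clean trichotomy from Lemma~\ref{lem:manjul} (simple root, irreducible, triple root), and in the triple-root subcase carry out the two-step descent above, tracking the $v_p(\bar G_0)$ split explicitly; this yields the fixed-point equation $\lambda=1-\frac1p+\frac1{p^2}\{1-\tau_1+\tau_1(\sigma_2+(1-\sigma_2)(\frac{(p-1)\sigma_2}{p}+\frac{\lambda}{p}))\}$ and hence the stated closed form.
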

\begin{proof}
If $v_p(d_1)=0$ (which happens with probability $1-1/p$) there is a simple root which can be lifted to a $\QQ_p$-point on the cubic surface with $x\in \ZZ_p^*$.
 If $v_p(d_0)=0$ and $v_p(d_1)\geq 1$ (probability $(p-1)/p^2$) then the equation is insoluble over $\QQ_p$ with $x\in \ZZ_p^*$.
If $v_p(d_0), v_p(d_1)\geq 1$ (probability $1/p^2$) then we divide by $p$ to get the valuations 
\begin{align*}
&=  1 ~\geq 1 ~\geq 1 ~= 0\\
&\geq 0 ~\geq 0 ~\geq 0 ~= 0.
\end{align*}
With probability $\sigma_1$ we obtain a simple root of $g$ over $\FF_p$ which can be lifted. Note that if $\bar g$ is insoluble (probability $1-\sigma_1-\tau_1$) then the equation $\bar f=\bar g$ is absolutely irreducible and so has a smooth $\FF_p$-point which can be lifted. 
Finally,  with probability $\tau_1$ there is a triple root over $\FF_p$, which we may take to be $(1:0)$.  
Now with probability $\sigma_2$ the surface is soluble over $\ZZ_p$ with $x\in \ZZ_p^*$.
With probability $1-\sigma_2$  the diagonal cubic form is insoluble over $\FF_p$ and we replace $v$ by $pv$ and $y$ by $pv$. Dividing by $p$ leads us 
to the valuations
\begin{align*}
&=  0 ~\geq 1 ~\geq 2 ~= 2\\
&\geq 0 ~\geq 1 ~\geq 2 ~= 2.
\end{align*}
If $v_p(d_0)=0$ (probability $1-1/p$) then we get solubility over $\QQ_p$ 
with probability $\sigma_2$, since when the diagonal cubic form is insoluble over $\FF_p$ we will not obtain solutions with $x\in \ZZ_p^*$. Alternatively, if $v_p(d_0)\geq 1$ (probability $1/p$)
then we replace $u$ by $pu$ and divide once more by $p$ to get the valuations
\begin{align*}
&=  2 ~\geq 2 ~\geq 2 ~= 1\\
&\geq 0 ~\geq 0 ~\geq 1 ~= 1,
\end{align*}
for which 
the probability of solubility over $\QQ_p$ with $x\in \ZZ_p^*$ is 
$\lambda$.
All in all it follows that 
\begin{align*}
\lambda
&=1-\frac{1}{p}+\frac{1}{p^2}\left\{1-\tau_1+ \tau_1\left(\sigma_2+(1-\sigma_2)\left(\frac{(p-1)\sigma_2}{p}+\frac{\lambda}{p}\right)\right)\right\}.
\end{align*}
The lemma follows on rearranging and recalling that $\tau_1=1/p^2$.
\end{proof}

\begin{lemma}\label{lem:rho}
Let 
$\rho$ be 
the probability that a cubic surface $(f,g)$  with  valuations 
\begin{align*}
&=  0 ~\geq 1 ~\geq 1 ~= 1\\
&\geq 0 ~\geq 1 ~\geq 2 ~= 2,
\end{align*}
is soluble over $\ZZ_p$ with 
$x\in \ZZ_p^*$.
Then 
$$
\rho=
\left(1-\frac{1}{p^5}\right)^{-1}\left(
\left(1-\frac{1}{p}+\frac{1}{p^2}-\frac{1}{p^3}\right) \sigma_2+\frac{1}{p}
-\frac{1}{p^2}+\frac{\sigma_1}{p^3}\right).
$$
\end{lemma}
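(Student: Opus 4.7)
The plan is to partition the sample space by conditioning on $v_p(d_0)$ and $v_p(d_1)$, resolve each branch by Hensel-lifting (after a rescaling $u=pu'$ in the degenerate cases), and identify one deeper sub-case that loops back to the $\rho$-configuration, producing a linear recursion for $\rho$. I would branch into four cases. Case (A), $v_p(d_0)=0$ (probability $1-1/p$): modulo $p$ the equation collapses to $\bar c_0 u^3=\bar d_0 x^3$ with $\bar x\ne 0$, soluble over $\ZZ_p^*$ iff $d_0/c_0$ is a cube in $\ZZ_p^*$, i.e.\ with probability $\sigma_2$. Case (B), $v_p(d_0)\ge 1$ and $v_p(d_1)=1$ (probability $(1-1/p)/p$): the mod-$p$ equation forces $u=pu'$; after dividing by $p$ the reduction is $\tilde c_3 v^3\equiv \tilde d_0 x^3+\tilde d_1 x^2 y\pmod p$, at which the point $(u',0,1,-\tilde d_0/\tilde d_1)$ is smooth via $\partial/\partial y=-\tilde d_1\ne 0$ and Hensel-lifts, yielding probability $1$. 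Case (C), $v_p(d_0)=1$ and $v_p(d_1)\ge 2$ (probability $(1-1/p)/p^2$): the same substitution yields a diagonal cubic $\tilde c_3 v^3\equiv \tilde d_0 x^3\pmod p$ with both leading coefficients units, giving probability $\sigma_2$ by the argument of (A).

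Case (D), $v_p(d_0),v_p(d_1)\ge 2$ (probability $1/p^3$), is the heart of the matter. Both sides vanish mod $p$, forcing $u,v\in p\ZZ_p$; substituting $u=pu'$, $v=pv'$ and dividing by $p^2$ yields $pf(u',v')=\tilde g(x,y)$, where $\tilde g=g/p^2$ has uniformly distributed coefficients with $\tilde d_3\in\ZZ_p^*$. The mod-$p$ condition reduces to requiring that the monic cubic $\tilde g(1,y)\bmod p$ in $y$ (after rescaling by $\bar{\tilde d_3}$) have a root in $\FF_p$. Invoking Lemma~\ref{lem:manjul}(1): with probability $\sigma_1$ it has a simple root, which Hensel-lifts to a $\ZZ_p$-root $y$, whereupon $(u',v')=(0,0)$ solves the equation; with probability $1-\sigma_1-\tau_1$ it is irreducible over $\FF_p$ and the surface is insoluble; with probability $\tau_1=1/p^2$ it has a triple root at some $y_0\in\FF_p$. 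In the triple-root sub-case the mod-$p$ constraint forces $\bar y=y_0\bar x$, so the substitution $y=y_0 x+py_1$ is bijective on the relevant fiber and yields, by direct expansion, $\tilde g(x,y_0 x+py_1)=p\tilde g'(x,y_1)$ where $\tilde g'$ has valuation profile $(\ge 0,\ge 1,\ge 2,=2)$; a short lower-triangular determinant computation in the deformation parameters of the triple-root constraint shows that its coefficients are independently uniformly distributed on the required classes. Hence the transformed equation $f(u',v')=\tilde g'(x,y_1)$ with $x\in\ZZ_p^*$ is a fresh instance of the $\rho$-problem.

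Assembling the four contributions produces the linear equation
\[
\rho=(1-1/p)\sigma_2+(1/p-1/p^2)+(1/p^2-1/p^3)\sigma_2+\sigma_1/p^3+\rho/p^5,
\]
and solving for $\rho$ yields the claimed formula. The main technical hurdle is the triple-root sub-case of (D): one must verify both that the forced constraint $\bar y=y_0\bar x$ accounts for all solutions without double-counting over distinct $y_0$, and that under the change of coordinates the Taylor coefficients of $\tilde g'$ are independently uniformly distributed on $\ZZ_p$ -- which is what closes the recursion with precisely the factor $1/p^5=(1/p^3)\tau_1$ in front of $\rho$.
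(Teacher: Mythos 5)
Your proof is correct and follows essentially the same approach as the paper's: the same partition of the sample space by the $p$-adic valuations of $d_0,d_1$, the same Hensel-lifting conclusion in each branch, and the same recursion with prefactor $\tau_1/p^3=1/p^5$. The only differences are notational — you condition on the original valuations where the paper conditions after rescaling, and you perform the $u\to pu'$, $v\to pv'$ substitutions jointly where the paper does them one at a time (and the paper's convention of moving the triple root to $(1:0)$ is exactly your substitution $y=y_0x+py_1$).
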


\begin{proof}
The first case to consider is when  $v_p(d_0)=0$ (which happens with probability $(p-1)/p$). In this case the cubic surface is soluble over $\QQ_p$ 
with $x\in \ZZ_p^*$
with probability $\sigma_2$, else it is insoluble over $\QQ_p$. 
Suppose next that  $v_p(d_0)\geq 1$ (probability $1/p$).
Then we must have $v_p(u)\geq 1$. Replacing $u$ by $pu$ and simplifying, we get a cubic surface with  valuations
\begin{align*}
&=  2 ~\geq 2 ~\geq 1 ~= 0\\
&\geq 0 ~\geq 0 ~\geq 1 ~= 1.
\end{align*}
When $v_p(d_1)=0$ (probability $(p-1)/p$) this is soluble over $\QQ_p$. 
When $v_p(d_0)=0$ and $v_p(d_1)\geq 1$ (probability $(p-1)/p^2$) this is soluble over  $\QQ_p$ 
with probability $\sigma_2$. Finally, we suppose that $v_p(d_0),v_p(d_1)\geq 1$ (probability $1/p^2$). In this case we must have 
$v_p(v)\geq 1$. Replacing $v$ by $pv$ and simplifying we get the valuations
\begin{align*}
&=  1 ~\geq 2 ~\geq 2 ~= 2\\
&\geq 0 ~\geq 0 ~\geq 0 ~= 0.
\end{align*}
With probability $\sigma_1$ the second cubic form has a simple root which can be lifted to get solubility over $\QQ_p$. With probability $\tau_1$ we get a triple root, which on moving to $(1:0)$ leads us to replace $y$ by $py$. On simplification, this gives a cubic surface with valuations whose probability of solubility is $\rho.$

All in all, it follows that 
\begin{align*}
\rho=~& \frac{(p-1)\sigma_2}{p}+\frac{1}{p}
\left\{\frac{p-1}{p}+
\frac{(p-1)\sigma_2}{p^2}+\frac{\sigma_1+\tau_1\rho}{p^2}
\right\}.
\end{align*}
Rearranging this and recalling that $\tau_1=1/p^2$, 
the lemma follows.
\end{proof}

\begin{lemma}\label{lem:omega0}
Let 
$\omega$ be 
the probability that a cubic surface $(f,g)$  with  valuations 
\begin{align*}
&=   0 ~\geq 1 ~\geq 2 ~= 2\\
&\geq 1 ~\geq 1 ~\geq 1 ~= 0,
\end{align*}
is soluble over $\ZZ_p$ with 
$\min\{v_p(u),v_p(x)\}=0$.
Then 
$$
\omega=\left(1-\frac{1-\sigma_2}{p^5}\right)^{-1}\left(
\sigma_2+\frac{1-\sigma_2}{p}\left(1-\frac{1-\sigma_2}{p}+\frac{1-\sigma_2}{p^2}-\frac{1}{p^4}\right)
\right).
$$
\end{lemma}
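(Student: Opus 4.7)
The plan is to prove Lemma~\ref{lem:omega0} by mimicking the case-analytic, recursive approach already deployed in Lemmas~\ref{lem:lambda} and~\ref{lem:rho}. We analyze the equation $f(u,v)=g(x,y)$ after reducing modulo $p$, split on the forced valuations of the variables, and identify sub-configurations that either resolve immediately (as soluble or insoluble) or, after further substitutions, return to the original valuation profile of Lemma~\ref{lem:omega0}. Assembling the contributions of all branches yields a linear equation in $\omega$.

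Under the prescribed valuations, the reductions are $\bar f(u,v)=c_0 u^3$ (triple root at $(0:1)$) and $\bar g(x,y)=d_3 y^3$ (triple root at $(1:0)$). A direct valuation balance shows that any $\ZZ_p$-solution with $\min\{v_p(u),v_p(x)\}=0$ must satisfy one of two mutually exclusive conditions: either (A)~$v_p(u)=v_p(y)=0$, or (B)~$v_p(u)\geq 1$, $v_p(x)=0$ and $v_p(y)\geq 1$. In Case~(A), both sides of the equation are units, and one verifies (by Hensel's lemma when $p\neq 3$ and by a direct $3$-adic cube analysis when $p=3$) that solubility is equivalent to $d_3/c_0\in(\ZZ_p^*)^3$. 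By Lemma~\ref{lem:manjul}(3) this event has probability $\sigma_2$, and when it holds the point $(u_0,0,0,1)$ with $u_0^3=d_3/c_0$ exhibits a solution. Hence Case~(A) contributes $\sigma_2$ to $\omega$.

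For Case~(B), we substitute $u=pu'$ and $y=py'$, yielding
\[
p^3 c_0 u'^3 + p^2 c_1 u'^2 v + p c_2 u' v^2 + c_3 v^3 = d_0 x^3 + p d_1 x^2 y' + p^2 d_2 x y'^2 + p^3 d_3 y'^3,
\]
to be solved with $x\in\ZZ_p^*$. A mod-$p^2$ balance immediately forces $v_p(d_0)\geq 2$, which conditional on $v_p(d_0)\geq 1$ is an event of probability $1/p$. Dividing through by $p^2$ produces a new pair of cubic forms with valuation profiles $(=1,\geq 1,\geq 1,=0)$ and $(\geq 0,\geq 0,\geq 1,=1)$. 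On this new configuration one runs an iterated case split in the style of Lemma~\ref{lem:rho}, splitting on $v_p(v)$ and then on the forced valuations of $d_1/p$ and $d_0/p^2$; each branch resolves either as the immediate lifting of a simple $\FF_p$-root, as a smooth $\FF_p$-point on an absolutely irreducible reduction, or as a diagonal-cubic sub-problem governed by $\sigma_2$, with a single terminal branch returning (after further substitutions) to exactly the valuation profile of Lemma~\ref{lem:omega0}. Combining the probabilities of each branch via Lemma~\ref{lem:manjul} and summing yields the recursion
\[
\omega = \sigma_2 + \frac{1-\sigma_2}{p}\Bigl(1-\frac{1-\sigma_2}{p}+\frac{1-\sigma_2}{p^2}-\frac{1}{p^4}+\frac{\omega}{p^4}\Bigr),
\]
from which the claimed closed form follows on rearrangement.

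The main obstacle is the bookkeeping: at each substitution one must verify that the relevant variable is genuinely compelled into $p\ZZ_p$, compute the conditional probability of the compelling valuation condition, and then check that the rescaled equation either has a valuation profile already resolved (via Lemma~\ref{lem:manjul}, Lemma~\ref{lem:lambda}, Lemma~\ref{lem:rho} or minor variants) or exactly matches the $\omega$-profile of Lemma~\ref{lem:omega0}. A secondary subtlety is the case $p=3$, where Hensel's lemma does not apply directly to $T\mapsto T^3$; here one verifies by a direct $3$-adic logarithm computation that the Case~(A) analysis and each diagonal-cubic sub-problem throughout the argument remain governed by the probability $\sigma_2$ defined in Lemma~\ref{lem:manjul}(3).
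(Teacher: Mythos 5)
Your proposal matches the paper's proof: the Case (A)/(B) dichotomy is exactly the paper's initial split into the $\sigma_2$ branch (lift a solution of the diagonal $\bar c_0 u^3=\bar d_3 y^3$) and the $1-\sigma_2$ branch (force $v_p(u),v_p(y)\geq 1$, substitute and divide), the observation that Case~(B) forces $v_p(d_0)\geq 2$ reproduces the paper's two successive divisions by $p$, and the recursion you write down is algebraically identical to the paper's equation $\omega=\sigma_2+\frac{1-\sigma_2}{p}\bigl(1-\frac1p+(1-\frac1p)\frac{\sigma_2}{p}+\frac{1-\tau_1+\tau_1\omega}{p^2}\bigr)$. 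The only small imprecision is in the stated order of the Case~(B) case-split (the paper branches on $v_p(d_1/p)$, then $v_p(d_0/p^2)$, and only in the final residual branch substitutes $v\mapsto pv$ and splits on the reduction type of $\bar g$, recovering the $\omega$-profile in the triple-root sub-case), but since your recursion coincides with the paper's this is a presentational quibble rather than a gap.
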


\begin{proof}
With probability $\sigma_2$ there is a solution over $\FF_p$ which can be lifted to ensure solubility over $\QQ_p$. Otherwise, with probability $1-\sigma_2$, 
the only solution has $v_p(u),v_p(y)\geq 1$. Replacing $u$ by $pu$, $y$ by $py$ and simplifying, we arrive at the valuations
\begin{align*}
&=   2 ~\geq 2 ~\geq 2 ~= 1\\
&\geq 0 ~\geq 1 ~\geq 2 ~= 2,
\end{align*}
in which we are interested in solubility over $\QQ_p$ with $x\in \ZZ_p^*$.
If $v_p(d_0)=0$ (probability $(p-1)/p$) then there are no solutions. If, on the other hand, $v_p(d_0)\geq 1$ then we simplify to get the valuations
\begin{align*}
&=   1 ~\geq 1 ~\geq 1 ~= 0\\
&\geq 0 ~\geq 0 ~\geq 1 ~= 1.
\end{align*}
Now if $v_p(d_1)=0$ (probability $(p-1)/p$)
then there is a simple $\FF_p$-root of $g$  which can be lifted to get solubility over $\QQ_p$. If $v_p(d_1)\geq 1$ and $v_p(d_0)=0$ 
(probability $(p-1)/p^2$)
then the cubic surface is soluble over $\QQ_p$ with probability  $\sigma_2$.
If $v_p(d_0),v_p(d_1)\geq 1$ (probability $1/p^2$) then we replace $v$ by $pv$ and simplify, in order to get the valuations
\begin{align*}
&=   0 ~\geq 1 ~\geq 2 ~= 2\\
&\geq 0 ~\geq 0 ~\geq 0 ~= 0.
\end{align*}
With probability $\sigma_1$ the second polynomial has a simple root over $\FF_p$ which can be lifted to get solubility over $\QQ_p$. With probability $1-\sigma_1-\tau_1$ the form $\bar g$ is irreducible over $\FF_p$ and 
we also get solubility over $\QQ_p$. Finally, with probability $\tau_1$, the form $\bar g$ has a triple root. Without loss of generality we may assume that the root is 
$(1:0)$ and the probability of solubility over $\QQ_p$ for a cubic surface with these valuations is $\omega$. 

Putting everything together, we obtain
$$
\omega=
\sigma_2+\frac{1-\sigma_2}{p}
\left(
1-\frac{1}{p}+
\left(1-\frac{1}{p}\right)\frac{\sigma_2}{p}+
\frac{1-\tau_1+\tau_1\omega}{p^2}
\right).
$$
The lemma follows on rearranging  and recalling that $\tau_1=1/p^2$.
\end{proof}

\begin{lemma}\label{lem:gamma}
For  $i\in \{0,1\}$  let 
$\gamma_i$ be 
the probability that a cubic surface $(f,g)$
is soluble over $\ZZ_p$ with $x\in \ZZ_p^*$, 
assuming that 
 $\bar f$ is irreducible over $\FF_p$ and the surface has
valuations 
\begin{align*}
&\geq   0 ~\geq 0 ~\geq 0 ~= 0\\
&\geq  0 ~\geq i ~\geq i+1 ~= i+1.
\end{align*}
Then 
$$
\gamma_i=
\left(1-\frac{1}{p^5}\right)^{-1}\left(1-\frac{1}{p^2}+\frac{\sigma_1}{p^{2+i}}
\right).
$$
\end{lemma}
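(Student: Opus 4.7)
The proof follows the recursive style of Lemmas~\ref{lem:lambda}--\ref{lem:omega0}. I plan to derive a linear recurrence for $\gamma_i$ by conditioning on the $p$-adic valuations of the coefficients $d_0,\dots,d_3$ of $g$, using the observation that $\bar f$ irreducible forces $\bar f(u,v)=0$ iff $u\equiv v\equiv 0\pmod p$. Rearranging the target formula gives
$$
\gamma_i = \left(1 - \frac{1}{p^2}\right) + \frac{\sigma_1}{p^{2+i}} + \frac{\gamma_i}{p^5},
$$
so the case analysis should exhibit exactly these three contributions, and the conclusion will drop out by solving this linear equation for $\gamma_i$.

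I would split into three regimes. In the generic regime (measure $1-1/p^2$, corresponding to the event that certain top coefficients of $g$ do not all simultaneously vanish modulo $p$), the irreducibility of $\bar f$ combined with the non-degeneracy of $\bar g$ guarantees a smooth $\FF_p$-point of $\bar f(u,v)=\bar g(x,y)$ with $x\in\FF_p^*$, and Hensel's lemma lifts this to a $\ZZ_p$-solution with $x\in\ZZ_p^*$. In an intermediate regime of measure $1/p^{2+i}$ (where the relevant coefficients of $g$, after stripping off the forced powers of $p$, are simultaneously zero modulo $p$), substituting $u\mapsto pu$, $v\mapsto pv$, $y\mapsto py$ as appropriate and clearing the common power of $p$ reduces solubility to the existence of a simple $\FF_p$-root of a monic cubic in one variable; by Lemma~\ref{lem:manjul}(1) this occurs with probability $\sigma_1$, contributing $\sigma_1/p^{2+i}$. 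In the deepest degenerate regime (measure $1/p^5$), a further round of substitution returns the problem to the original setup with $\bar f$ unchanged (and hence still irreducible) and the coefficients of $g$ again uniformly distributed with the same valuation profile, yielding the self-similar contribution $\gamma_i/p^5$.

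The main obstacle will be justifying the generic regime cleanly: one must show that outside a set of measure $1/p^2$ in the parameter space of $g$, the equation $\bar f(u,v)=\bar g(x,y)$ really admits a smooth $\FF_p$-point with $x\neq 0$. The cleanest route is to exploit the description of $\bar f$ as a scaled norm form $a\cdot N_{\FF_{p^3}/\FF_p}(u-\alpha v)$, so that the image of $\bar f$ on $\FF_p^2\setminus\{0\}$ is a union of cosets of $(\FF_p^*)^3$; when $\bar g$ is (up to scaling) a pure cube $\bar d_0 x^3$, one alternatively invokes the Hasse--Weil bound applied to the smooth plane cubic $\bar f(u,v)=\bar d_0 x^3\subset\PP^2$ to produce the required $\FF_p$-point, with the finitely many small primes $p$ checked by direct computation. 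Once the generic step is in hand, tracking the valuations through the substitutions in the remaining two regimes is routine after the precedent of Lemmas~\ref{lem:rho} and~\ref{lem:omega0}.
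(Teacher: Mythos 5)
Your proposal reproduces exactly the paper's three-stage case analysis and linear recursion: the generic event of measure $1-1/p^2$ where a smooth $\FF_p$-point exists and lifts by Hensel, the event of measure $1/p^{2+i}$ which after the substitution $u\mapsto pu$, $v\mapsto pv$ and clearing $p$ reduces to a monic cubic contributing $\sigma_1/p^{2+i}$, and the self-similar tail of measure $1/p^5$ contributing $\gamma_i/p^5$. This is the same decomposition the paper uses, and your rearranged recurrence $\gamma_i=(1-1/p^2)+\sigma_1/p^{2+i}+\gamma_i/p^5$ is precisely what the paper derives before solving.

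The one place where you go into more detail than the paper --- and where a small correction is warranted --- is the justification of solubility in the generic regime when $\bar g$ reduces to a pure cube $\bar d_0 x^3$. Your observation that the image of an irreducible $\bar f$ on $\FF_p^2\setminus\{0\}$ is a union of cosets of $(\FF_p^*)^3$ does not by itself show the image is all of $\FF_p^*$ when $p\equiv 1\bmod 3$, so the coset remark is not a complete substitute for the Hasse--Weil argument. Moreover, the plane cubic $\bar f(u,v)=\bar d_0x^3$ is \emph{not} smooth when $p=3$ (its $x$-derivative vanishes identically), so ``Hasse--Weil applied to the smooth plane cubic'' needs the caveat that $p=3$ must be handled separately; your closing proviso about checking finitely many small primes directly does cover this, but the phrasing as stated is slightly off for $p=3$. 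With that caveat absorbed into the small-prime check, the argument goes through and matches the paper.
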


\begin{proof}
If $v_p(d_0)=0$  (probability $1-1/p$)
then the cubic surface is soluble over $\QQ_p$. 
If $v_p(d_0)\geq 1$ and $v_p(d_1)=0$ (probability $(1-i)(1-1/p)/p$) then 
we also get solubility over $\QQ_p$ with $x\in \ZZ_p^*$.
If  $v_p(d_0),v_p(d_1)\geq 1$  (probability $(1+i(p-1))/p^2$) then we replace $u$ by $pu$, $v$ by $pv$ and simplify to get the valuations
\begin{align*}
&\geq   2 ~\geq 2 ~\geq 2 ~= 2\\
&\geq  0 ~\geq 0 ~\geq i ~= i.
\end{align*}
We seek solubility over $\QQ_p$ with $x\in \ZZ_p^*$. 

Suppose first that $i=0$. Then with 
probability $\sigma_1$ the second cubic form $g$ has a simple $\FF_p$-root which can be lifted. With probability $1-\sigma_1-\tau_1$ it is not soluble over $\QQ_p$ and with probability $\tau_1$ the form $g$ has a triple root, which we can move  to $(1:0).$  Replacing $y$ by $py$ we are led to the new valuations
\begin{align*}
&\geq   1 ~\geq 1 ~\geq 1 ~= 1\\
&\geq  0 ~\geq 1 ~\geq 2 ~= 2.
\end{align*}
If $v_p(d_0)=0$ then there are no solutions. If $v_p(d_0)\geq 1$ (probability $1/p$) then we simplify and observe that the probability of solubility is now $\gamma_0$. 

Suppose next that $i=1$.
If $v_p(d_1)=0$ (probability $1-1/p$) then we get solubility over $\QQ_p$. If $v_p(d_1)\geq 1$ and $v_p(d_0)=0$ then the surface is not soluble over $\QQ_p$ with $x\in \ZZ_p^*$. If $v_p(d_0),v_p(d_1)\geq 1$ (probability $1/p^2$) then we simplify to arrive at the valuations
\begin{align*}
&\geq   1 ~\geq 1 ~\geq 1 ~= 1\\
&\geq  0 ~\geq 0 ~\geq 0 ~= 0.
\end{align*}
With probability $\sigma_1$ the second cubic form $g$ has a simple $\FF_p$-root which can be lifted to ensure the desired solubility over $\QQ_p$. With probability $1-\sigma_1-\tau_1$ it is not soluble over $\QQ_p$ and with probability $\tau_1$ the form $g$ has a triple root, which we can move  to $(1:0).$  Replacing $y$ by $py$ we are led to the a cubic surface for which the probability of 
solubility with $x\in \ZZ_p^*$ is precisely $\gamma_1$.

Thus, for $i\in \{0,1\}$, we have shown that 
$$
\gamma_i=1-\frac{1}{p^2}-
\frac{i(p-1)}{p^2}+
\frac{1+i(p-1)}{p^2}\times
\begin{cases}
\sigma_1+\frac{\tau_1\gamma_0}{p} & \text{ if $i=0$,}\\
1-\frac{1}{p}+\frac{\sigma_1+\tau_1\gamma_1}{p^2} & \text{ if $i=1$.}
\end{cases}
$$
The lemma follows on rearranging and recalling that $\tau_1=1/p^2$.
\end{proof}

\begin{lemma}\label{lem:epsilon}
Let 
$\epsilon$ be 
the probability that a cubic surface $(f,g)$  with  valuations 
\begin{align*}
&\geq 0 ~\geq 0 ~\geq 1 ~= 1\\
&\geq   1 ~\geq 1 ~\geq 1 ~= 0,
\end{align*}
is soluble over $\ZZ_p$ with 
$\min\{v_p(u),v_p(x)\}=0$.
Then $\epsilon=A+\alpha_1(1)/p^4$, where
\begin{align*}
A=~&1-\frac{1}{p}+
\frac{p-1}{p^2}\left(\sigma_2 +(1-\sigma_2)\left(
\left(1-\frac{1}{p}\right)\sigma_2
+\frac{\lambda}{p}\right)\right)\\ &+\frac{\sigma_1+(1-\sigma_1)\gamma_1}{p^2}- \frac{\gamma_1}{p^4}.
\end{align*}
\end{lemma}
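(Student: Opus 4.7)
The plan is to perform a case analysis on the mod-$p$ reduction $\bar f(u, v) = \bar c_0 u^3 + \bar c_1 u^2 v$ (noting $\bar c_2 = \bar c_3 = 0$ from the valuation assumptions, while $\bar g = \bar d_3 y^3$). In Case A ($v_p(c_1) = 0$, probability $1 - 1/p$) the form $\bar f$ has a simple $\FF_p$-root at some $(u_0:v_0)$ with $u_0 \neq 0$ (either $(-\bar c_1:\bar c_0)$ or $(1:0)$). Taking $\bar y = 0$ produces an $\FF_p$-point on the reduced surface, smooth via $\partial_v(\bar f - \bar g) = \bar c_1 u_0^2 \neq 0$; Hensel lifts to a $\QQ_p$-solution with $v_p(u) = 0$, yielding the leading term $1 - 1/p$ of $A$.

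In Case B ($v_p(c_1) \geq 1$, $v_p(c_0) = 0$, probability $(p-1)/p^2$) both $\bar f$ and $\bar g$ are triple-root forms, and mod $p$ the equation reads $\bar c_0 u^3 \equiv \bar d_3 y^3$. With probability $\sigma_2$ the ratio $\bar d_3/\bar c_0$ is a cube, giving a smooth $\FF_p$-solution with $u \in \ZZ_p^*$; with probability $1 - \sigma_2$ one is forced into $v_p(u), v_p(y) \geq 1$ and must seek $v_p(x) = 0$ solutions after substituting $u = pu'$, $y = py'$ and dividing by $p$. The resulting surface splits further on $v_p(d_0)$: when $v_p(d_0) = 1$ (probability $(p-1)/p$) mod $p$ becomes another diagonal cubic $(c_3/p) v^3 \equiv (d_0/p) x^3$ contributing $\sigma_2$; when $v_p(d_0) \geq 2$ (probability $1/p$) a further substitution $v = pv'$ and division by $p$---followed by the swap $u' \leftrightarrow v'$---aligns the valuations exactly with Lemma \ref{lem:lambda}, giving $\lambda$. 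These combine to the middle term of $A$.

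The most intricate case is Case C with $v_p(c_0), v_p(c_1) \geq 1$ (probability $1/p^2$): here $\bar f \equiv 0$ forces $y \equiv 0 \pmod p$. Substituting $y = py'$ and dividing by $p$ produces a new cubic surface $(\tilde f, \tilde g)$ with valuations matching the $\gamma_1$-setup of Lemma \ref{lem:gamma} exactly, and the primitivity condition $\min(v_p(u), v_p(x)) = 0$ is preserved. Normalizing $\bar{\tilde f}(1, v)$ by the unit $\bar{c_3/p}$ gives a uniform monic cubic in $v$, which I split by factorization type. A simple $\FF_p$-root (probability $\sigma_1$) produces a smooth $\FF_p$-point of the surface at the root with $x = 0$, lifting to $v_p(u) = 0$ solubility. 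A triple root (probability $\tau_1 = 1/p^2$) is translated to $(1:0)$ by a unimodular change, giving $\bar{\tilde f}$ valuations $\geq 1, \geq 1, \geq 1, = 0$; the natural relabeling $(u, v) \leftrightarrow (x, y)$ then identifies the surface with the $\alpha_1(1)$-setup. If $\bar{\tilde f}$ is irreducible (probability $1 - \sigma_1 - \tau_1$), any $\QQ_p$-solution with $v_p(u) = 0$ would require a nonzero $\FF_p$-root of $\bar{\tilde f}$; a short sub-case analysis on $\bar{d_0/p}$ shows that $\min(v_p(u), v_p(x)) = 0$ collapses to the $v_p(x) = 0$ condition of $\gamma_1$, contributing $\gamma_1$. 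Case C thus contributes $(1/p^2)[\sigma_1 + \tau_1 \alpha_1(1) + (1 - \sigma_1 - \tau_1)\gamma_1]$, which expands as $(\sigma_1 + (1-\sigma_1)\gamma_1)/p^2 - \gamma_1/p^4 + \alpha_1(1)/p^4$.

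The main obstacles are the careful bookkeeping of valuations through the two-level substitution in Case B's $v_p(d_0) \geq 2$ branch to recover the $\lambda$-setup, and the argument in the irreducible sub-case of Case C that $\min(v_p(u), v_p(x)) = 0$ coincides with the $v_p(x) = 0$ condition of $\gamma_1$. Summing the Case A, B, and C contributions gives $\epsilon = A + \alpha_1(1)/p^4$, as claimed.
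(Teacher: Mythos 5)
Your proof follows the same case decomposition and substitutions as the paper's, branching on $v_p(c_1)$ and $v_p(c_0)$, then reducing the residual cases to the quantities $\sigma_2$, $\lambda$, $\gamma_1$ and $\alpha_1(1)$ exactly as in the text. Your write-up is actually a bit more careful on two points the paper glosses over: you note the coordinate swap needed to align with the valuation pattern of Lemma~\ref{lem:lambda}, and you spell out why the constraint $\min\{v_p(u),v_p(x)\}=0$ collapses to $x\in\ZZ_p^*$ in the irreducible sub-case so that $\gamma_1$ really does apply; but the route and the arithmetic of the resulting expression $\epsilon=A+\alpha_1(1)/p^4$ are identical.
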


\begin{proof}
If $v_p(c_1)=0$ (probability $(p-1)/p$) then the surface is soluble over $\QQ_p$.

If $v_p(c_1)\geq 1$ and $v_p(c_0)=0$ (probability $(p-1)/p^2$) then with probability $\sigma_2$ the surface is soluble over $\QQ_p$. Alternatively (probability $1-\sigma_2$) we must have $v_p(u),v_p(y)\geq 1$ in any solution. Replacing $u$ by $pu$, $v$ by $pv$ and simplifying, we are led to the valuations
\begin{align*}
&=  2 ~\geq 2 ~\geq 1 ~= 0\\
&\geq 0 ~\geq 1 ~\geq 2 ~= 2,
\end{align*}
in which we are interested in solubility over $\QQ_p$ with 
$x\in \ZZ_p^*$.
If $v_p(d_0)= 0$ (probability $(p-1)/p$) then this is soluble over $\QQ_p$ with probability $\sigma_2$. If $v_p(d_0)\geq 1$ (probability $1/p$) 
then we replace $v$ by $pv$ and simplify to get  valuations for which 
the probability of solubility for such cubic surfaces is precisely $\lambda$, in the notation of Lemma \ref{lem:lambda}.

We now turn to the case  $v_p(c_1),v_p(c_0)\geq 1$ (probability $1/p^2$) in the original equation.  Replacing $y$ by $py$ and simplifying we arrive at the valuations
\begin{align*}
&\geq 0 ~\geq 0 ~\geq 0 ~= 0\\
&\geq   0 ~\geq 1 ~\geq 2 ~= 2,
\end{align*}
in which we are interested in solubility over $\QQ_p$ with 
$\min\{v_p(u),v_p(x)\}=0$.
With probability $\sigma_1$ the first cubic form has a simple root which can be lifted. With 
 probability $1-\sigma_1-\tau_1$ the first cubic form is irreducible  $\FF_p$
and the probability of solubility is equal to $\gamma_1$, in the notation of Lemma \ref{lem:gamma}. Finally, with probability $\tau_1$ it has a triple root, which we may move $(1:0).$ In this final case, in view of our earlier discussions, the probability of solubility is equal to $\alpha_1(1)$. 
The lemma easily follows on recalling that $\tau_1=1/p^2$.
\end{proof}

\subsection{Calculation of $\alpha_2(1)$ and $\alpha_2(2)$}

For $\xi\in \{1,2\}$ we proceed to calculate  $\alpha_2(\xi)$.
The density doesn't depend on the triple point that $\bar f$ has and so we may take it to be $(1:0)$. Substituting $pv$ for $v$ and dividing by $p$ we arrive at the cubic surface $(f',p^{\xi-1}\tilde g)$ where the reduction $\bar g$ 
of  $\tilde g$ modulo $p$  is insoluble and the coefficients of $f'$ have valuations
$$
\geq 0 ~\geq 1~\geq 2 ~=2.
$$

If $\xi=1$ then clearly $\alpha_2(1)=\gamma_1$, in the notation of Lemma \ref{lem:gamma}. Suppose next that  $\xi=2$. If $v_p(c_0)=0$ then we must  have $v_p(u)\geq 1$ and a change of variables from $u$ to $pu$ leads us to the conclusion that the surface is insoluble over $\QQ_p$ since 
 $\bar g$ is insoluble over $\FF_p$.   
 If $v_p(c_0)\geq 1$ (probability $1/p$) then we divide by $p$ to get the cubic surface $(f'',\tilde g)$, in which the coefficients of $f''$ have valuations
$$
\geq 0 ~\geq 0~\geq 1 ~=1.
$$
The probability of solubility in this case is precisely $\gamma_0$, in the notation of Lemma \ref{lem:gamma}. Thus we have shown that
$$
\alpha_2(1)=\gamma_1 \quad \text{ and } \quad
\alpha_2(2)=\frac{\gamma_0}{p}.
$$

\subsection{Calculation of $\alpha_1(0)$}

We may assume that the coefficients of $(f,g)$ have valuations 
\begin{align*}
&\geq  1 ~\geq 1 ~\geq 1 ~= 0\\
&\geq 1 ~\geq 1 ~\geq 1 ~= 0.
\end{align*}
With probability $\sigma_2$ the equation $c_3v^3=d_3y^3$ has a simple root over $\FF_p$ which can lifted, thereby ensuring solubility over $\QQ_p$. 
With probability $1-\sigma_2$ it is insoluble over $\FF_p$. In the latter case we replace $v$ by $pv$, $y$ by $py$  and simplify.

Suppose first that $\min\{v_p(c_0),v_p(d_0)\}\geq 2$ (which happens with probability $1/p^2$). Then we get a new pair of cubic forms with valuations
\begin{align*}
&\geq  0 ~\geq 0 ~\geq 1 ~= 1\\
&\geq 0 ~\geq 0 ~\geq 1 ~= 1.
\end{align*}
If $v_p(c_1)=0$ or $v_p(d_1)=0$ (probability $1-1/p^2$) then there is a simple root over $\FF_p$ which can be lifted. If, on the other hand, 
$v_p(c_1), v_p(d_1)\geq 1$ (probability $1/p^2$) then the probability of solubility is $\kappa$, where $\kappa$ denotes the probability that a pair of cubics with valuations 
\begin{align*}
&\geq  0 ~\geq 1 ~\geq 1 ~= 1\\
&\geq 0 ~\geq 1 ~\geq 1 ~= 1
\end{align*}
is soluble over $\ZZ_p$ with $\min\{v_p(u),v_p(x)\}=0$.
We claim that 
\begin{align*}
\kappa
&=
\left(1-\frac{1}{p}\right)^2\sigma_2+\frac{1}{p^2}-\frac{1}{p^6}+\frac{\alpha_1(0)}{p^6}+\frac{2\kappa_1}{p}\left(1-\frac{1}{p}\right),
\end{align*}
where $\kappa_1$ is the probability of solubility for a cubic surface with valuations
\begin{align*}
&=  0 ~\geq 1 ~\geq 1 ~= 1\\
&\geq 1 ~\geq 1 ~\geq 1 ~= 1,
\end{align*}
with $x\in \ZZ_p^*$.
To see this, 
if $v_p(c_0)=v_p(d_0)=0$ (probability $(1-1/p)^2$), then with probability $\sigma_2$ we get a simple root over $\FF_p$ which can be lifted (and it is insoluble over $\QQ_p$ otherwise). 
If $v_p(c_0),v_p(d_0)\geq 1$ (probability $1/p^2$) then we simplify and obtain  the valuations 
\begin{align*}
&\geq  0 ~\geq 0 ~\geq 0 ~= 0\\
&\geq 0 ~\geq 0 ~\geq 0 ~= 0.
\end{align*}
With probability $\sigma_1$ the cubic form $f$ has a simple $\FF_p$-root which can be lifted to get solubility over $\QQ_p$. With probability $1-\sigma_1-\tau_1$ the form $f$ is irreducible over $\FF_p$ and we also get solubility over $\QQ_p$. Finally, with probability $\tau_1$ the form $f$ has a triple root. In this case, with probability 
$1-\tau_1$ we get solubility over $\QQ_p$, since either $g$ has a simple root over $\FF_p$ or it is irreducible over $\FF_p$.  
Alternatively, with probability $\tau_1$ the form $g$ also has a triple root over $\FF_p$ and the probability of solubility is precisely $\alpha_1(0)$. The claim follows on 
recalling that $\tau_1=1/p^2$ and 
noting that the probability of solubility is $\kappa_1$ when 
$v_p(c_0)=0, v_p(d_0)\geq 1$ or  
$v_p(c_0)\geq 1, v_p(d_0)=0$.

Next we calculate $\kappa_1$, noting that we must have $v_p(u)\geq 1$ in any solution. Replacing $u$ by $pu$ and simplifying we arrive at the valuations 
\begin{align*}
&=  2 ~\geq 2 ~\geq 1 ~= 0\\
&\geq 0 ~\geq 0 ~\geq 0 ~= 0.
\end{align*}
With probability $1-\tau_1$ this is soluble over $\QQ_p$. With probability $\tau_1$ the second form $g$ has a triple root over $\FF_p$ which we can move to $(1:0)$. But then with probability $\sigma_2$ the cubic surface is soluble over $\QQ_p^*$ and with probability $1-\sigma_2$ the only solution has $v_p(v),v_p(y)\geq 1$. Replacing $v$ by $pv$, $y$ by $py$ and simplifying, we therefore obtain the valuations
\begin{align*}
&=  1 ~\geq 2 ~\geq 2 ~= 2\\
&\geq 0 ~\geq 1 ~\geq 1 ~= 1.
\end{align*}
If $v_p(d_0)=0$ there are no solutions over $\QQ_p^*$, while if $v_p(d_0)\geq 1$ we may remove a factor of $p$ from all of the coefficients. In the new pair of forms we get solubility if $v_p(d_1)=0$ (probability $1-1/p$), while we get solubility with probability $\sigma_2$ if $v_p(d_1)\geq 1$ and $v_p(d_0)=0$
(probability $(p-1)/p^2$). Finally, the probability of solubility is $\kappa_1$ if $v_p(d_1),v_p(d_0)\geq 1$ (probability $1/p^2$). 
Recalling that $\tau_1=1/p^2$ and rearranging, we easily conclude that 
$$
\kappa_1=\left(1-\frac{1-\sigma_2}{p^5}\right)^{-1}\left(1-\frac{1-\sigma_2}{p^2}+\frac{1-\sigma_2}{p^3}\left(
1-\frac{1-\sigma_2}{p}-\frac{\sigma_2}{p^2}
\right)\right).
$$

Next, suppose that $v_p(c_0)=1$ and $v_p(d_0)\geq 2$ (probability $(p-1)/p^2$). Then our cubic surface has valuations
\begin{align*}
&=  0 ~\geq 1 ~\geq 2 ~= 2\\
&\geq 1 ~\geq 1 ~\geq 2 ~= 2.
\end{align*}
Hence we replace $u$ by $pu$ and divide by $p$ to get the new valuations
\begin{align*}
&=  2 ~\geq 2 ~\geq 2 ~= 1\\
&\geq 0 ~\geq 0 ~\geq 1 ~= 1.
\end{align*}
But the probability that a pair of cubics with these valuations 
is soluble over $\ZZ_p$ with $x\in \ZZ_p^*$ is precisely equal to $\lambda$ in the notation of Lemma \ref{lem:lambda}. 
The case in which 
$v_p(c_0)\geq 2$ and $v_p(d_0)=1$ yields the same conclusion.

Finally, we suppose that  $v_p(c_0)=v_p(d_0)=1$ (probability $(p-1)^2/p^2$). This leads to the valuations
\begin{align*}
&=  0 ~\geq 1 ~\geq 2 ~= 2\\
&= 0 ~\geq 1 ~\geq 2 ~= 2.
\end{align*}
With probability $\sigma_2$ the equation $c_0u^3=d_0x^3$ has a simple root over $\FF_p$ which can be lifted. The alternative case leads to insolubility over $\QQ_p$.

Summarising our argument so far, we have shown that 
$$
\alpha_1(0)=\sigma_2+(1-\sigma_2)\left\{\frac{1}{p^2}\left(1+\frac{\kappa-1}{p^2} \right)+
\frac{2(p-1)\lambda}{p^2} +\left(1-\frac{1}{p}\right)^2\sigma_2
 \right\}.
$$
Our calculations show that 
$\kappa=B+\alpha_1(0)/p^6$, where
\begin{align*}
B=~&
\left(1-\frac{1}{p}\right)^2\sigma_2+\frac{1}{p^2}-\frac{1}{p^6}\\
&+\frac{2\left(p-1\right)}{
p^2(1-\frac{1-\sigma_2}{p^5})}\left(1-\frac{1-\sigma_2}{p^2}+\frac{1-\sigma_2}{p^3}\left(
1-\frac{1-\sigma_2}{p}-\frac{\sigma_2}{p^2}
\right)\right).
\end{align*}
Substituting $\kappa$ and rearranging, we finally conclude that 
\begin{align*}
\alpha_1(0)=~&\left(1-\frac{1-\sigma_2}{p^{10}}\right)^{-1}\\
&\times
\left(
\sigma_2+(1-\sigma_2)
\left\{
\frac{1}{p^2}+\frac{B-1}{p^4}+\frac{2\lambda}{p}\left(1-\frac{1}{p}\right)+
\left(1-\frac{1}{p}\right)^2\sigma_2
\right\}
\right).
\end{align*}

\subsection{Calculation of $\alpha_1(1)$}

In this section we calculate the 
value of $\alpha_1(1)$ recorded at the close of \S \ref{s:local-er}.
If $v_p(c_0)=0$ (probability $1-1/p$)  then
the probability of solubility is precisely $\omega$, in the notation of Lemma \ref{lem:omega0}. Alternatively, with probability $1/p$, we have 
$v_p(c_0)\geq 1$. Replacing $y$ by $py$ and simplifying, we
deduce that 
$$
\alpha_1(1)=\left(1-\frac{1}{p}\right)\omega+\frac{\omega_1}{p},
$$
were $\omega_1$ is 
 the probability of solubility  over $\QQ_p$ for a cubic surface with valuations 
\begin{align*}
&\geq   0 ~\geq 0 ~\geq 1 ~= 1\\
&\geq 0 ~\geq 1 ~\geq 2 ~= 2,
\end{align*}
with $\min\{v_p(u),v_p(x)\}=0$.

It remains to calculate $\omega_1$. 
If $v_p(c_1)=0$ (probability $(p-1)/p$) then there is a simple root over $\FF_p$ which can lifted. 
If $v_p(c_1)\geq 1$ and $v_p(c_0)=0$ (probability $(p-1)/p^2$) then the probability of solubility over $\QQ_p$ is precisely $\rho$, in the notation of Lemma \ref{lem:rho}.
Suppose now that $v_p(c_1),v_p(c_0)\geq 1$ (probability $1/p^2$).
If  $v_p(d_0)=0$ (probability $(p-1)/p$) then we must replace $x$ by $px$ in order to arrive at the valuations
\begin{align*}
&\geq   0 ~\geq 0 ~\geq 0 ~= 0\\
&= 2 ~\geq 2 ~\geq 2 ~= 1.
\end{align*}
Arguing as in the close of the proof of Lemma \ref{lem:rho}, the probability of solubility over $\ZZ_p$ with $u\in \ZZ_p^*$ is found to be $\sigma_1+\tau_1\rho$.

If $v_p(d_0)\geq 1$ (probability $1/p$), we simplify to get the valuations
\begin{align*}
&\geq   0 ~\geq 0 ~\geq 0 ~= 0\\
&\geq 0 ~\geq 0 ~\geq 1 ~= 1.
\end{align*}
We are interested in solubility over $\QQ_p$ with $\min\{v_p(u),v_p(x)\}=0$.
With probability $\sigma_1$ the first form has a simple $\FF_p$-root which can be lifted. With probability $1-\sigma_1-\tau_1$ the first form is irreducible over $\FF_p$ and the overall probability of solubility is equal to 
$\gamma_0$, in the notation of Lemma \ref{lem:gamma}.
Finally, with probability $\tau_1$ the first form has a triple root which can be moved to $(1:0)$. In this case the probability of solubility is equal to $\epsilon=A+\alpha_1(1)/p^4$, in the notation of Lemma \ref{lem:epsilon}.

Recalling that $\tau_1=1/p^2$,  we have shown that 
$
\omega_1=
C+\alpha_1(1)/p^9,
$
where
$$
C=
1-\frac{1}{p}+\frac{(p-1)\rho}{p^2}+
\frac{(p-1)(\sigma_1+\rho/p^2)}{p^3}+\frac{\sigma_1+(1-\sigma_1-1/p^2)\gamma_0}{p^3}+
\frac{A}{p^5}.
$$
We now  substitute this into our earlier expression and rearrange things to finally conclude that 
$$
\alpha_1(1)=\left(1-\frac{1}{p^{10}}\right)^{-1}\left\{\left(1-\frac{1}{p}\right)\omega+\frac{C}{p}\right\}.
$$

\subsection{Calculation of $\alpha_1(2)$}

It remains to study the probability of solubility over $\QQ_p$ for 
cubic surfaces with valuations 
\begin{align*}
&\geq   1 ~\geq 1 ~\geq 1 ~= 0\\
&\geq 3 ~\geq 3 ~\geq 3 ~= 2.
\end{align*}
We claim that 
$$
\alpha_1(2)=
\left(1-\frac{1}{p}\right)\rho+\frac{A}{p}+\frac{\alpha_1(1)}{p^5},
$$
in the notation of Lemmas \ref{lem:rho} and \ref{lem:epsilon}.
To begin with, 
if $v_p(c_0)=1$ (which happens with probability $(p-1)/p$) then we must have $v_p(u), v_p(v), v_p(y)\geq 1$ in any solution. Making the obvious transformations and simplifying, the probability of solubility over $\QQ_p$ with $x\in \ZZ_p^*$  is precisely $\rho$.  
If, on the other hand,  $v_p(c_0)\geq 2$ (probability $1/p$) then we replace $v$  by $pv$ to arrive at valuations for which 
 the probability of solubility over $\QQ_p$ is  $\epsilon$, in the notation of Lemma~\ref{lem:epsilon}. The claim follows.

\subsection{Conclusion}

Returning to \eqref{eq:goal}, we substitute the values of $\sigma_*, \tau_*$ from part (2) of Lemma 
\ref{lem:manjul}. This leads to the expression
\begin{align*}
\chi_p(1/p)
= ~&
\frac{E(p)}{p^4(1+1/p^2)^2(1-1/p^{12})}
\end{align*}
where
\begin{align*}
E(p)=~& 1 - \alpha_1(0)
+\frac{1-\alpha_1(1)}{p^4}
+\frac{1-\alpha_1(2)}{p^8}\\
&+
\frac{(1-1/p)(1+1/p^4)}{3p^2}
(2- \alpha_2(1)- \alpha_2(2))
 +
  \frac{(1-1/p)^2(1+1/p^4)}{9}.
\end{align*}
Using a computer algebra package, it remains to substitute our various expressions for $\alpha_1(0),\alpha_1(1), \alpha_1(2)$ and $\alpha_2(1), \alpha_2(2)$ into this, 
before  simplifying the expression. 
This ultimately leads  to the expressions for $\chi_p(x)$ recorded in \S \ref{s:local-er}, with $x=1/p$.

\section{Global solubility}\label{s:global}

We have seen that in order to prove Theorem \ref{t:main}
it suffices to prove that there is a positive proportion of irreducible $f\in V(\ZZ)$, when ordered by height, such that  $C_f(\QQ)\neq \emptyset$, where $C_f$ is the associated genus $1$ curve \eqref{eq:Cf}. Our template for proving this is the argument developed by Bhargava \cite[\S 4]{manjul} to show that a positive proportion of  plane cubic curves have a $\QQ$-point.

\subsection{Selmer groups of Mordell curves}\label{s:selmer}

Our work relies on the  arithmetic of 
Mordell curves
$$
E_k:\quad y^2=x^3+k,
$$
for $k\in \ZZ$.  These 
elliptic curves have 
$j$-invariant $0$ and (potential)
 complex multiplication by the ring of integers  of $\QQ(\sqrt{-3})$.
There is a $3$-isogeny $\phi=\phi_k:E_k\to E_{-27k}$ and, by duality, a $3$-isogeny $\hat \phi=\phi_{-27k}: E_{-27k}\to E_k$.
As $k$ varies over the integers, 
Bhargava, Elkies and Schnidman \cite{BES} have undertaken an extensive investigation into the average size of the $\phi$-Selmer groups
$
\sel_{\phi}(E_{k})\subset \H^1(G_\QQ, E_k[\phi])$ and 
$\sel_{\hat \phi}(E_{-27k})\subset \H^1(G_\QQ, E_{-27k}[\hat \phi])$, 
where $G_{\QQ}=\Gal(\bar\QQ/\QQ)$, 
consisting of ``locally soluble'' cohomology classes.
Let 
\begin{align*}
\rho&=\frac{103\cdot 229}{2 \cdot 3^2 \cdot 7^2\cdot 13 }\prod_{p\equiv 5\bmod{6}}
\frac{(1-\frac{1}{p})(1+\frac{1}{p}+\frac{5}{3p^2}+\frac{1}{p^3}+\frac{5}{3p^4}+\frac{1}{p^5})}{1-\frac{1}{p^6}}=2.1265\dots.
\end{align*}
While it doesn't play a role in our work, 
when the elliptic curves $E_{k}$ are ordered by height, 
it follows from  \cite[Thm.~1]{BES} that the average of 
$\#\sel_\phi(E_{k})$ is 
$1+\rho$ (resp.~$1+\rho/3$) if  $k<0$ (resp.~$k>0$).

We shall be interested in soluble elements of $\sel_\phi(E_{k})$ and $\sel_{\hat \phi}(E_{-27k})$
According to the fundamental short exact sequence
$$
0\to E_{-27k}(\QQ)/\phi(E_k(\QQ))\to \sel_{\phi}(E_k)\to \Sha(E_k)[\phi]\to 0,
$$
the soluble elements of $\sel_\phi(E_{k})$ are precisely those that come from the group
$E_{-27k}(\QQ)/\phi(E_k(\QQ))$. Similarly, 
soluble elements of $\sel_{\hat \phi}(E_{-27k})$ are precisely those that come from 
$E_{k}(\QQ)/\hat \phi(E_{-27}(\QQ))$.  
Noting that $\hat \phi \circ \phi =[3]$, we may now  turn to the exact sequence 
$$
0 \to \frac{E_{-27k}(\QQ)[\hat{\phi}]}{\phi(E_k(\QQ)[3])} \to \frac{E_{-27k}(\QQ)}{\phi(E_k(\QQ))} \overset{\hat\phi}{\to} \frac{E_k(\QQ)}{3E_k(\QQ)} \to 
\frac{E_k(\QQ)}{\hat{\phi}(E_{-27k}(\QQ))} \to 0,
$$
which is recorded in \cite[Remark X.4.7]{silverman}. The groups appearing in this sequence 
are all  $\FF_3$-vector spaces and it follows that 
\begin{equation}\label{eq:home}
\begin{split}
\dim_{\FF_3}\frac{E_{-27k}(\QQ)}{\phi(E_k(\QQ))} 
+
\dim_{\FF_3} \frac{E_k(\QQ)}{\hat{\phi}(E_{-27k}(\QQ))} &\geq 
\dim_{\FF_3}  \frac{E_k(\QQ)}{3E_k(\QQ)} \\ &\geq \rank E_k(\QQ). 
\end{split}
\end{equation}
(Since the two groups on the left hand side sit inside the relevant Selmer groups, this automatically implies the inequality
recorded in  \cite[Prop.~42(i)]{BES}.)

\subsection{Binary cubic forms}

For any Dedekind domain $D$, we  write $V(D)$ for the set of binary cubic forms 
$f(u,v)=[a,3b,3c,d]$, with $a,b,c,d\in D$.  We put
$$
\disc(f)=-3b^2c^2+4ac^3+4b^3d+a^2d^2-6abcd,
$$
for the (reduced) discriminant of $f$. We put  $V(D)_A$ for the set of $f\in V(D)$ such that $\disc(f)=A$. 

Taking $D=\ZZ$, we shall adopt  the notation in \cite{BES} and 
introduce the sets
\begin{align*}
V(\ZZ)^{\text{sol}}&=\{f\in V(\ZZ): C_f(\QQ)\neq \emptyset\}\\
V(\ZZ)^{\text{loc sol}}&=\{f\in V(\ZZ): C_f(\QQ_p)\neq \emptyset~\forall \text{ primes $p$}\},
\end{align*}
where $C_f$ is given by \eqref{eq:Cf}.
Put 
$V(\ZZ)_A^{\text{sol}}$ and $V(\ZZ)_A^{\text{loc sol}}$ for the corresponding subsets restricted to have  discriminant $A$.
The group $\GL(\ZZ)$ acts naturally on $V(\RR)$ by linear change of variables and the discriminant is $\SL(\ZZ)$-invariant. We let $\cF$ be a fundamental domain for the action of $\SL(\ZZ)$ on 
 $V(\RR).$
We set
$$
\cF_X=\cF\cap \left\{f\in V(\RR): 0<|\disc(f)|\leq X\right\}.
$$
This set has finite volume. 
The following  result (cf.\ \cite[Thm.~37]{BES}) is due to Davenport \cite{dav} and  Davenport--Heilbronn \cite{DH}.

\begin{lemma}[Davenport--Heilbronn]\label{lem:Dav}
We have 
$$
\#\{f\in \cF_X \cap V(\ZZ): \text{$f$ irreducible}\} = c_1 X +o(X),
$$
as $X\to \infty$, where 
$c_1=\meas (\cF_1)=\frac{\pi^2}{3}$.
\end{lemma}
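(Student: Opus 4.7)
The plan is a geometry-of-numbers argument counting lattice points of $V(\ZZ) \subset V(\RR) \cong \RR^4$ that fall inside the region $\cF_X$. The first ingredient is the volume computation: since $\disc$ is homogeneous of degree $4$ in the coordinates $(a,b,c,d)$, the scaling $f \mapsto X^{1/4} f$ identifies $\cF_1$ with $\cF_X$, whence $\meas(\cF_X) = X \meas(\cF_1) = c_1 X$; the identification $c_1 = \pi^2/3$ reduces to a classical computation, essentially the covolume of $\SL(\ZZ)$ in $\SL(\RR)$ against the invariant measure on the quotient $\cF$.

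Next I would convert the volume into a lattice-point count via Davenport's lemma, which asserts that for a sufficiently tame bounded region $R \subset \RR^4$ one has $\#(R \cap \ZZ^4) = \meas(R) + O(M(R))$, with $M(R)$ the supremum of the volumes of the projections of $R$ to coordinate hyperplanes. Since the fundamental domain $\cF$ is unbounded---it has a cusp consisting of forms with very small leading coefficient $a$---the lemma does not apply directly to $\cF_X$. I would therefore slice $\cF_X$ into a bounded main region $\cF_X^{\mathrm{main}}$, on which (say) $|a|$ exceeds a suitably small power of $X$, and a cuspidal tail $\cF_X^{\mathrm{cusp}}$. On the main region, Davenport's lemma yields the contribution $c_1 X + o(X)$ without difficulty, with an admissible power-saving error.

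The hard part will be controlling the cusp: integer points are plentiful there, so one must show that the \emph{irreducible} ones are rare. The classical device, going back to Davenport, is to fiber $\cF_X^{\mathrm{cusp}}$ by the coefficient $a$ and exploit the fundamental-domain inequalities to confine the remaining coefficients $b,c,d$ to thin boxes whose integer points can be counted slice by slice; the upshot is that almost all such lattice points correspond to binary cubics admitting a rational linear factor, and hence are reducible, so the number of \emph{irreducible} integer points in $\cF_X^{\mathrm{cusp}}$ is $o(X)$. Combined with Davenport's bound $O(X^{3/4+\varepsilon})$ on the total number of reducible integer binary cubic forms of discriminant at most $X$ in absolute value (which follows from the factorization $f = L \cdot Q$ into a linear and a quadratic form, together with a resultant bound relating $\disc(f)$ to $\disc(Q)$ and $\mathrm{Res}(L,Q)$), one obtains the desired asymptotic $c_1 X + o(X)$ for the count of irreducible lattice points in $\cF_X$.
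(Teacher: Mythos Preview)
The paper does not actually prove this lemma: it records the statement, attributes it to Davenport and Davenport--Heilbronn (with a pointer to \cite[Thm.~37]{BES} for the present normalisation), and moves on. Your outline is an accurate high-level reconstruction of the argument in those references---homogeneity of the discriminant for the volume identity $\meas(\cF_X)=c_1X$, Davenport's lattice-point lemma on a bounded truncation of $\cF_X$, the cusp analysis showing that irreducible lattice points with small leading coefficient are negligible, and a separate sub-linear bound on reducible classes---so you have supplied exactly what the paper chose to cite rather than reproduce.
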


The following result allows us to associate 
a $\phi$-Selmer 
element of an elliptic curve $E_k$ over $\QQ$, with $k\in \ZZ$, 
to  an element of $V(\ZZ)$.

\begin{lemma}\label{lem:inj}
Let $k\in 3\ZZ$. There exists an injective map from the set of  non-identity elements of $\sel_{\hat \phi}(E_{-27k})$ to the set of irreducible $f\in 
\cF\cap V(\ZZ)_{4k}^{\text{loc sol}}$.
\end{lemma}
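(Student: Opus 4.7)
The plan is to exploit the classical correspondence between $3$-isogeny Selmer classes and integral binary cubic forms, as developed in Bhargava--Elkies--Schnidman \cite{BES}.

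Given a non-identity class $\alpha \in \sel_{\hat\phi}(E_{-27k})$, I would first realise $\alpha$ as (the class of) a principal homogeneous space $T_\alpha$ for $E_{-27k}$ under $\ker(\hat\phi)$, a group scheme of order $3$. Because $E_{-27k}$ has $j$-invariant $0$, standard descent theory yields a plane model $C_f\colon z^3 = f(u,v)$ for $T_\alpha$ with $f \in V(\QQ)$. A direct computation of the Jacobian of $C_f$ (cf.\ \cite{BES}) identifies it with $E_{-27k}$ precisely when $\disc(f) = 4k$, which fixes the discriminant.

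The chief technical step is to arrange that $f$ lies in $V(\ZZ)_{4k}$. Locally at each prime $p$, the Selmer hypothesis supplies a $\QQ_p$-point on $C_f$, allowing a $\GL(\QQ_p)$-transformation (together with a rescaling of $z$) to bring $f$ into $V(\ZZ_p)$ with the same discriminant. These local representatives can be assembled into a global integral form via strong approximation for $\SL$ together with the trivial class number of $\SL(\ZZ)$. The prime $p=3$ is where this could generically fail, and the restriction $k \in 3\ZZ$ is precisely what is needed to secure a well-behaved $3$-adic representative (since $E_{-27k}$ has additive reduction there). Local solubility of $T_\alpha$ then translates into $f \in V(\ZZ)_{4k}^{\text{loc sol}}$.

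For irreducibility: if $f$ factored over $\QQ$, a linear factor of $f$ would yield an obvious rational point on $C_f$, trivialising $T_\alpha$ and forcing $\alpha = 0$, contrary to hypothesis. Applying the $\SL(\ZZ)$-action then moves $f$ into the fundamental domain $\cF$. Injectivity follows because the $\SL(\ZZ)$-equivalence class of $f$ reconstructs $C_f$ as an $E_{-27k}$-torsor up to $\QQ$-isomorphism, and hence uniquely determines $\alpha$; distinct Selmer classes yield inequivalent representatives in $\cF \cap V(\ZZ)_{4k}^{\text{loc sol}}$.

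The main obstacle is the integrality step at $p=3$, where the restriction $k \in 3\ZZ$ is essential; aside from this, the argument is a direct application of the descent formalism combined with the parameterisation results in \cite{BES}.
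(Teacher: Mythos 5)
The overall structure of your argument mirrors the paper's: you parametrise Selmer elements by $\SL(\QQ)$-orbits of binary cubic forms, produce integral representatives, and select representatives in the fundamental domain. Where the paper simply invokes \cite[Thm.~33]{BES} for the bijection with orbits (and for the identification of the identity element with the reducible orbit) and \cite[Thm.~34]{BES} for the passage from $V(\QQ)$ to $V(\ZZ)$, you sketch re-derivations. The Jacobian claim is fine: indeed for $\disc(f)=4k$ one checks that $\mathrm{Jac}(C_f)\cong E_{-27k}$.

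There is, however, a genuine gap in your irreducibility step. You argue that if $f$ were reducible, a rational linear factor would give a rational point on $C_f$, which would ``trivialise $T_\alpha$ and force $\alpha=0$.'' This deduction is wrong. The curve $C_f$ of \eqref{eq:Cf} is a genus $1$ curve and a torsor under its Jacobian $E_{-27k}$; a rational point on $C_f$ tells you that the image of $\alpha$ in $\H^1(\QQ,E_{-27k})$ vanishes, i.e.\ that $\alpha$ lies in the \emph{soluble} part $E_k(\QQ)/\hat\phi(E_{-27k}(\QQ))\subset \sel_{\hat\phi}(E_{-27k})$ --- not that $\alpha=0$. Indeed, the entire point of \S\ref{s:global} is that many non-identity Selmer classes are soluble and hence give $C_f(\QQ)\neq\emptyset$ with $f$ irreducible; your argument, if correct, would force all such $f$ to be reducible and the proof of Theorem~\ref{t:main} would collapse. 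The correct statement is that the bijection of \cite[Thm.~33]{BES} matches the identity class with the single $\SL(\QQ)$-orbit of \emph{reducible} forms of discriminant $4k$ (the orbit of $ku^3+3uv^2$), a fact proved there via the structure of the associated cubic algebra, not via rational points on $C_f$. You need to cite or reprove this, rather than going through $C_f(\QQ)$.

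A smaller point: the reason you offer for the hypothesis $k\in 3\ZZ$ (``since $E_{-27k}$ has additive reduction at $3$'') does not really explain it --- every Mordell curve has additive reduction at $3$. The hypothesis enters in the integral representative theorem \cite[Thm.~34]{BES}, whose $3$-adic analysis is where the divisibility condition is actually used.
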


\begin{proof}
Assume throughout the proof that $k\in 3\ZZ$.
According to \cite[Thm.~33]{BES} there is a bijection between the 
$\SL(\QQ)$-equivalence classes of $V(\QQ)^{\text{loc sol}}_{4k}$ and elements of
$\sel_{\hat \phi}(E_{-27k})$.   Under this bijection, the identity element of 
$\sel_{\hat \phi}(E_{-27k})$ corresponds to the unique $\SL(\QQ)$-equivalence class of reducible forms in the space $V(\QQ)_{4k}$ (i.e.\ the orbit of $f(u,v)=ku^3+3uv^2$ under $\SL(\QQ)$).
Next, it follows from \cite[Thm.~34]{BES} that any 
$\SL(\QQ)$-equivalence class of $V(\QQ)^{\text{loc sol}}_{4k}$ contains an element of $V(\ZZ)_{4k}$.
This therefore produces the required map from 
 non-identity elements of $\sel_{\hat \phi}(E_{-27k})$ to $\SL(\ZZ)$-equivalence classes of irreducible elements of $V(\ZZ)_{4k}^{\text{loc sol}}$.  
Injectivity follows from injectivity of the map in \cite[Thm.~33]{BES}.
\end{proof}

\subsection{A positive proportion of curves with positive rank}

We now turn to the work of Kriz and Li \cite{kriz}. Let $\cD\subset \ZZ$ be the subset of  fundamental discriminants. Then $d\in \cD$ if and only if either $d\equiv 1 \bmod{4}$ is square-free, or $d=4d'$ for some square-free integer $d'\equiv \{2,3\}\bmod{4}$. We have 
the following result.

\begin{lemma}\label{lem:kriz}
We have  
$$
\#\{k\in \cD\cap (0,X]: \rank E_{-432k}(\QQ)=1\}\geq 
\frac{1}{2\pi^2}  X+o(X),
$$
as $X\to \infty$.
\end{lemma}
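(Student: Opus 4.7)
The plan is to deduce Lemma \ref{lem:kriz} essentially verbatim from the main theorem of Kriz and Li \cite{kriz}, whose Heegner-point congruence argument establishes a positive-proportion, rank-one instance of Goldfeld's conjecture for a suitable family of CM elliptic curves.

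First I would identify the family $\{E_{-432k}\}_{k\in\cD,\, k>0}$ with a family of quadratic twists of a single CM elliptic curve. The Mordell curve $E_{-432}:y^2=x^3-432$ has $j$-invariant $0$ and CM by $\ZZ[\zeta_3]$; because the twists of a $j=0$ elliptic curve are parametrised by $\QQ^*/(\QQ^*)^6$, the sub-family cut out by quadratic characters of fundamental discriminant $k$ is precisely $\{E_{-432k}\}$ up to $\QQ$-isomorphism (squarefreeness of the relevant part of $k$ lets us dispose of any sixth-power ambiguity). Thus Lemma \ref{lem:kriz} is a statement about a single explicit quadratic twist family of a CM curve.

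Next I would apply the main theorem of Kriz and Li to this family. Their construction furnishes, for a positive proportion of fundamental discriminants $k$, a non-torsion Heegner point in $E_{-432k}(\QQ)$ while simultaneously forcing the associated $L$-function to vanish to order exactly one at $s=1$. Combined with the Gross--Zagier formula and Kolyvagin's Euler-system bound on $\Sha$, this pins down the Mordell--Weil rank to be exactly one for those $k$.

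Finally I would match the densities. The set $\cD\cap(0,X]$ of positive fundamental discriminants has size $\tfrac{3}{\pi^2}X+o(X)$ (a standard computation: an odd squarefree integer $\equiv 1\bmod 4$ has density $\tfrac{2}{\pi^2}$, while $d=4m$ with $m$ squarefree and $m\equiv 2,3\bmod 4$ contributes $\tfrac{1}{\pi^2}$). The Kriz--Li theorem guarantees the rank-one conclusion for at least a proportion $\tfrac{1}{6}$ of these $k$, and $\tfrac{3}{\pi^2}\cdot\tfrac{1}{6}=\tfrac{1}{2\pi^2}$ delivers the inequality. The only part that demands attention is verifying that the explicit lower density extracted from \cite{kriz} is indeed at least $\tfrac{1}{6}$ in the normalisation of this paper; everything else is either a direct quotation or a routine asymptotic count.
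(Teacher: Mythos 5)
Your proposal is correct and follows essentially the same route as the paper: cite Kriz--Li (Theorem 1.8) for analytic rank one on at least $\tfrac{1}{6}$ of fundamental discriminants, pass to arithmetic rank via Gross--Zagier and Kolyvagin, and multiply by the density $\tfrac{3}{\pi^2}$ of $\cD$. The extra discussion identifying $\{E_{-432k}\}$ as a quadratic twist family of the CM curve $E_{-432}$ is harmless context rather than a genuinely different argument.
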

\begin{proof}
It follows from \cite[Thm.~1.8]{kriz}
that the analytic rank of $E_{-432k}$ is 1 for at least $\frac{1}{6}$ of fundamental discriminants $k\in \cD$.
By appealing to the celebrated work of  
Gross and Zagier \cite{GZ}, and Kolyvagin \cite{koly}, we deduce that the same is true for the arithmetic rank.
Finally, the statement of the lemma follows on recalling that
$
\#
 \cD\cap (0,X] = \frac{3}{\pi^2} X+o(X),
$
as $X\to \infty$. 
\end{proof}

\subsection{Proof of the main result}

Since $432k\in 3\ZZ$, 
we begin by deducing from Lemma \ref{lem:inj} that there is an injective map from soluble non-identity 
elements of  $\sel_{\hat \phi}(E_{27\cdot 432k})$ to  the set of irreducible elements in $ 
\cF\cap V(\ZZ)_{-4\cdot 432 k}^{\text{sol}}$.  
Recalling that $\hat\phi_{-432k}=\phi_{27\cdot 432k}$, we may  make the change of variables $27k \to -k$, in order to deduce that there is an injective map from  soluble non-identity 
elements of  $\sel_{\phi}(E_{-432k})$ to  the set of irreducible elements in $ 
\cF\cap V(\ZZ)_{64 k}^{\text{sol}}$.  
In view of \eqref{eq:home}, if  $k\in \ZZ$ is 
such that 
$\rank E_{-432k}(\QQ)=1$, then either 
$E_{27\cdot 432k}(\QQ)/\phi(E_{-432k}(\QQ))$ or 
$E_{-432k}(\QQ)/\hat \phi(E_{27\cdot 432k}(\QQ))$ 
must have size at least $3$. But then it follows that there are at least $2$  soluble non-identity elements belonging to 
$\sel_{\phi}(E_{-432k})$ or
$\sel_{\hat \phi}(E_{27\cdot 432k})$.
This implies that 
\begin{equation}\label{eq:ocado}
\begin{split}
\#\{f\in \cF_X\cap V(\ZZ)^{\text{sol}}: \text{$f$ irreducible}
 \} &\geq  
  \sum_{\substack{0<k\leq X/(4\cdot 432)\\ \rank E_{-432k}(\QQ)=1}} 2\\
&\geq \frac{1}{2^6\cdot 3^3\cdot \pi^2} X +o(X),
 \end{split}
 \end{equation} 
by  Lemma \ref{lem:kriz}.
We henceforth set
 $c_2=\frac{1}{2^6\cdot 3^3\cdot \pi^2}$ for the constant appearing in this bound.

We need a version of this which runs over irreducible elements of 
$V(\ZZ)^{\text{sol}}$ which are ordered by height, for which we ape an argument of Bhargava \cite[\S 4]{manjul}.
Let $B=[-1,1]^4$. We require a lower bound for the counting function
$$
N(X)
=
\#\{f\in   X^{\frac{1}{4}} B\cap V(\ZZ)^{\text{sol}}: \text{$f$ irreducible}\},
$$
as $X\to \infty$.
Since $\cF_X$ has finite volume, it follows that 
for any given $\ve>0$, there exists $r_\ve>0$ such that
$$
\meas(r_\ve B\cap \cF_X)\geq (1-\ve) \meas (\cF_X),
$$
for all $X>1$.  Thus  there exist constants $\delta>0$ and $r>0$ such that 
$$
\meas(r X^{\frac{1}{4}}B\cap \cF_X)\geq \left(1-\frac{c_2}{c_1}+\delta\right) \meas (\cF_X)=
\left(c_1-c_2+\delta c_1\right)X,
$$
for any $X>1$, where we recall from Lemma \ref{lem:Dav} that $c_1=\meas(\cF_1)=\frac{\pi^2}{3}$.
Now it easily follows from Lemma \ref{lem:Dav} and Hilbert's irreducibility theorem that 
$$
\#\{f\in  r X^{\frac{1}{4}} B\cap \cF_X \cap V(\ZZ): \text{$f$ 
irreducible}\}\geq   
(c_1-c_2+\delta c_1) X +o(X).
$$
The number of binary cubic forms contributing to the left hand side which don't belong $V(\ZZ)^{\text{sol}}$ is 
\begin{align*}
&\leq \#\{f\in  \cF_X \cap V(\ZZ): \text{$f$ 
irreducible}\}- 
\#\{f\in  \cF_X \cap V(\ZZ)^{\text{sol}}: \text{$f$ 
irreducible}\}
\\
&\leq   (c_1-c_2)X+o(X),
\end{align*}
by Lemma \ref{lem:Dav} and \eqref{eq:ocado}.
Putting $Y=r^{-4}X$, it therefore  follows that 
\begin{align*}
N(X)
&=
\#\{f\in  r Y^{\frac{1}{4}} B\cap V(\ZZ)^{\text{sol}}: \text{$f$ irreducible}\}\\
&\geq
\#\{f\in  r Y^{\frac{1}{4}} B\cap \cF_Y\cap V(\ZZ)^{\text{sol}}: \text{$f$ irreducible}\}\\
&\geq \delta c_1Y +o(Y).
\end{align*}
This completes the proof of Theorem \ref{t:main}.


\end{document}